\theoremstyle{theorem}
\newtheorem{theorem}{Theorem}
\newtheorem{prop}[theorem]{Proposition}
\newtheorem{lemma}[theorem]{Lemma}
\newtheorem*{conj}{Conjecture}
\newtheorem*{main}{The Main Theorem}
\newcommand{\Rb}{\mathbb{R}}
\newcommand{\Nb}{\mathbb{N}}
\newcommand{\Zb}{\mathbb{Z}}
\newcommand{\Qb}{\mathbb{Q}}
\newcommand{\Cb}{\mathbb{C}}
\newcommand{\PP}{\mathcal{P}}
\newcommand{\LL}{\mathcal{L}}
\newcommand{\QQ}{\mathcal{Q}}
\newcommand{\im}{\operatorname{Im}}
\newcommand{\T}{\top}
\author{Benjamin Ellis, David A. Nash,\\ Jonathan Needleman, and Michael Raney}
\date{}
\title{Non-magic Hypergraphs}
\begin{document}

%\begin{flushright}
%David Nash  \\
%Le Moyne College \\
%Syracuse, NY 13214\\
%\verb+nashd@lemoyne.edu+

%\vspace{2 mm}

%Jonathan Needleman\\
%Le Moyne College \\
%Syracuse, NY 13214\\
%\verb+needlejs@lemoyne.edu+
%\end{flushright}
\maketitle
\begin{abstract}
This article studies a generalization of magic squares to $k$-uniform hypergraphs.  In traditional magic squares the entries come from the natural numbers.  A magic labeling of the vertices in a graph or hypergraph has since been generalized to allow for labels coming from any abelian group.   We demonstrate an algorithm for determining whether a given hypergraph has a magic labeling over some abelian group.  A slight adjustment of this algorithm also allows one to determine whether a given hypergraph can be magically labeled over $\Zb$.  As a demonstration, we use these algorithms to determine the number of magic $n_3$-configurations for $n=7, \dots, 14$.
\end{abstract}

%\section{Introduction (temporary heading)}
The notion of a magic square has existed for thousands of years, with surviving written examples dating back to at least $650$ BC.  One of the oldest written examples is the so-called ``Lo Shu'' magic square -- a $3\times 3$ magic square from Chinese legends.  According to the legend, the Lo Shu square was observed as a pattern on the shell of a tortoise by Emperor Yu sometime between 2200 and 2100 BC.  As a simple idea that is often fiendishly difficult to implement, it is no wonder that magic squares have been studied and marveled at throughout history.  The idea is to fill in numbers into a square so that the sum along each row, column and diagonal are all equal to the same number -- often called the magic constant.  For instance the square in Figure~\ref{fig:square} is a representation of the ``Lo Shu'' magic square from the legends.  Here the magic constant is 15.

\begin{figure}[!htbp]
\captionsetup[subfigure]{labelformat=simple}
\centering
\begin{tikzpicture}
    \draw[thick] (0,0) -- (2,0);
    \draw[thick] (0,.667) -- (2,.667);
    \draw[thick] (0,1.333) -- (2,1.333);
    \draw[thick] (2,2) -- (0,2);
    \draw[thick] (0,0) -- (0,2);
    \draw[thick] (.667,0) -- (.667,2);
    \draw[thick] (1.333,0) -- (1.333,2);
    \draw[thick] (2,0) -- (2,2);
    \draw (.333,.333) node {$8$};
    \draw (1,.333) node {$1$};
    \draw (1.667,.333) node {$6$};
    \draw (.333,1) node {$3$};
    \draw (1,1) node {$5$};
    \draw (1.667,1) node {$7$};
    \draw (.333,1.667) node {$4$};
    \draw (1,1.667) node {$9$};
    \draw (1.667,1.667) node {$2$};
\end{tikzpicture}
\caption{The ``Lo Shu'' magic square.}
\label{fig:square}
\end{figure}

There have been numerous generalizations of magic squares to other shapes.  Ely introduced the idea of magic designs \cite{Ely}.  A design is an incidence structure consisting of ``points'' and ``lines,'' with each line being a subset of points.   A \emph{magic design} is then an injective function from the points to the natural numbers where the sum along any line is constant.  While Ely focused primarily on triangles and hexagons, other designs have since been studied (see e.g.\ \cite{Trotter}, \cite[Ch.17]{Gardner}, \cite{Hendricks}, and \cite{MN}).  A particulary nice family of designs comes from (combinatorial) configurations.  These are designs where every line has the same number of points, every point has the same number of lines through it, no two lines can intersect more than once, and no two points can appear on more than one line together.  For example, Trenkler \cite{Trenkler:3} studied magic stars which are configurations with two lines through every point.  More recently, Raney \cite{Raney} studied magic trilateral free $n_3$-configurations where three lines pass through every point, and each line contains three points.  In addition, Nash and Needleman \cite{NN2} studied magic finite projective planes which are configurations which contain a quadrilateral and further require all lines to intersect, and all pairs of points to be connected by a line.

On a different, but related front, Sedl$\acute{\text{a}}\check{\text{c}}$ek was the first to suggest studying magic labelings of graphs in \cite{Sedlacek}.  He suggested labeling the edges of a graph with real numbers and then defined an edge-labeling to be magic if the sum of the labels on all edges incident to a given vertex was a constant independent of the choice of vertex.  Stewart \cite{Stewart} studied some special cases of these labelings and called an edge-labeling \emph{supermagic} if the labels were consecutive integers starting from 1.  Note that the classical $n\times n$ magic square (ignoring diagonals) corresponds to a supermagic labeling of the complete bipartite graph $K_{n,n}$.  Many, many papers (see e.g.\ \cite{Doob2}, \cite{Jeurissen}, \cite{SGL}, \cite{Ivanco}, and \cite{LLSW} as examples) have since been devoted to determining whether and when various families of graphs can admit a supermagic edge-labeling (although many authors use the term ``magic" rather than supermagic).  For a more complete treatment of this history (and some of what follows) we suggest the ongoing survey by Gallian \cite{Gallian}.

In a departure from the above notion, Kotzig and Rosa \cite{KR1,KR2} defined a \emph{magic valuation} of a graph to be a labeling of both the vertices and the edges (now referred to as a \emph{total labeling} following Wallis \cite{Wallis}) with the consecutive integers starting from 1 such that the sum of any edge together with its two endpoints is a constant.  This idea was revived in 1996 when Ringel and Llad$\acute{\text{o}}$ \cite{RL} redefined these as \emph{edge-magic} (total) labelings.  Edge-magic total labelings of graphs have also been widely studied (see e.g.\ \cite{ELNR} and \cite{WBMS} for some important early examples).

More recently, MacDougall, Miller, Slamin, and Wallis \cite{MMSW} moved back towards Sedl$\acute{\text{a}}\check{\text{c}}$ek's original idea by introducing the notion of a \emph{vertex-magic} total labeling.  Such a labeling still labels both the vertices and the edges using the consecutive integers starting with 1, but instead requires that the sum of the labels on all edges incident to a given vertex plus the label on the vertex itself be a constant.  These magic graphs have also been extensively studied, with \cite{GM1}, \cite{GM2}, and \cite{AM} as more recent examples of work in this area.

Trenkler \cite{Trenkler:2} was the first to generalize supermagic edge-labelings to include \emph{hypergraphs}.  A hypergraph is a generalization of a graph which allows for multiple vertices to appear on each (hyper)edge.  In Trenkler's work a labeling of the complete $k$-partite hypergraph is supermagic if the sum of the labels on all edges incident to a fixed choice of $(k-1)$ vertices is independent of the choice of vertices.  Even more recently,  Boonklurb, Narissayaporn, and Singhun \cite{BNS} generalized super edge-magic labelings to hypergraphs as well.  Their generalization more closely parallels the original notion by requiring the sum of the label on an edge together with the labels on all vertices appearing on that edge to be a constant.  To be supermagic, all edge labels must be greater than any vertex labels in addition to the labels being consecutive integers starting from 1.

Generalizations have also been made to the labels over time.  While historically, $n \times n$ magic squares have most often been labeled by the integers 1 through $n^2$ -- and supermagic labelings are an attempt to hold on to this requirement -- there is no need for us to require such specificity.  To consider a function or labeling magic one simply needs a way to ``add'' the outputs or labels from each vertex and/or edge. Since the points in a graph, hypergraph, incidence structure, design, etc.\ do not appear in any fixed order, it is also necessary that the ``addition'' be commutative.  Thus, Doob \cite{Doob1} generalized magic edge-labelings to allow for labels coming from any abelian group.  He further added labels to the vertices and used those as restrictions on the sums of edge labels incident to each vertex while simultaneously relaxing the requirement that distinct edges have distinct labels.  In the special case where the sums are constant independent of vertex choice, these labelings have since been termed \emph{group-magic} edge-labelings (note, these are also sometimes referred to as $A$-magic edge-labelings, where $A$ denotes the (additive) abelian group used).  Some more recent work in this area can be seen in \cite{LSW,SLS,LL,Salehi,SL,JJ1,JJ2}.

One thing that nearly all magic shapes have in common (regardless of the type of magic labeling considered) is that they require each edge or line to contain the same number of points.  From the hypergraph perspective, such structures are called \emph{$k$-uniform} hypergraphs.  In what follows we consider the general case of $k$-uniform hypergraphs with vertices labeled by elements from an (additive) abelian group.  Note, we do not require our labels to be non-zero elements of the group (as in the $A$-magic situation) and we \emph{do} require that distinct vertices have distinct labels.

Observe, that we can incorporate the notion of an edge-magic total labeling (generalized to $k$-uniform hypergraphs) by imagining the label on each edge/line to be replaced by a label on an extra vertex created on that line that is not incident to any other line.  Our version of a magic labeling also serves to cover the cases of magic edge-labelings and vertex-magic total labelings by considering the dual (hyper)graphs where the sum along all edges incident to a fixed vertex becomes the sum of all vertices contained within a given edge.  Thus, we choose to follow \cite{NN2} in describing magic vertex-labelings of $k$-uniform hypergraphs over abelian groups.

In what follows, we demonstrate algorithms for determining both whether a given hypergraph is magic over some abelian group and also whether it is magic specifically over $\Zb$.  The methods draw upon many results from number theory, and is an application of the local-global principle.

Much of the motivation for our discoveries stemmed from work specifically on $n_3$-configurations.  Thus, in the final section, we specialize to $n_3$-configurations and describe some results gained from the application of our algorithms to this type of hypergraph.

\subsection*{Labeling Hypergraphs}
While traditional graphs are collections of vertices with edges connecting them, they have been generalized to \emph{hypergraphs} by allowing edges to connect (or contain) more than two vertices.  A hypergraph can, therefore, be represented as a pair $\Gamma=(\PP,\LL)$ where $\PP$ is the set of \emph{points} and $\LL$ is a collection of subsets of $\PP$ that we call the set of \emph{lines}.  A hypergraph is called \emph{$k$-uniform} if each line has exactly $k$ points on it.

A hypergraph is called a \emph{configuration} if, in addition:
\begin{itemize}
\item[(a)] Each pair of points $p_i \neq p_j \in \PP$ is contained in at most one line.
\item[(b)] Each pair of lines $L_i \neq L_j \in \LL$ intersects at at most one point.
\item[(c)] Each line is incident to the same number of points.
\item[(d)] Each point is incident to the same number of lines.
\end{itemize}
A configuration with $n$ points, each incident to $k$ lines, and $m$ lines, each incident to $l$ points is called an \emph{$(n_k, m_l)$-configuration}.  Note that such configurations can only occur if $nk=ml$.  A configuration is called \emph{symmetric} in the special case when $n=m$, in which case it follows that $k=l$, and these are often referred to as \emph{$n_k$-configurations}.  Figure \ref{fig:hypergraph} is an example of a $4$-uniform hypergraph which is not a configuration since the points $p_1$ and $p_2$ are simultaneously coincident to more than one line.

To study labelings on a  hypergraph $\Gamma=(\PP,\LL)$ we construct the $\Zb$-modules
\[\Zb\PP=\left\{\left.\sum_{p\in \PP}a_p p~\right|~a_p\in \Zb\right\} \text{~and~} \Zb\LL=\left\{\left.\sum_{L\in \LL}b_L L~\right|~b_L\in \Zb\right\} \]
of the formal $\Zb$-linear combinations of points and lines respectively.

 The hypergraph $\Gamma$ determines a relationship between $\Zb\PP$ and $\Zb\LL$ via a unique $\Zb$-map $A:\PP\rightarrow \Zb\LL$ known as the \emph{incidence map}.  For $p\in \PP$, let
 \[Ap=\sum_{{L\in \LL} \atop {p \in L}}L\]
 and then extend $\Zb$-linearly.  This is the map that sends a point to the sum of lines through the given point. When  the points $p_1, \dots, p_n$ in $\PP$ and the lines $L_1, \dots, L_m$ in $\LL$ are enumerated, the map $A$ can be written as an $|\LL| \times |\PP|$ \emph{incidence matrix} whose $i,j$-entry is $1$ if $p_j \in L_i$ and is zero otherwise.  %So we may think of the incidence matrix as having rows labeled by the lines in $L$ and columns labeled by the points in $P$. {\color{red}($<--$ we should probably drop this sentence)}
Figure~\ref{fig:incmatrix} is the incidence matrix from the hypergraph in Figure \ref{fig:hypergraph}. %This then implies that $A$ is not $J_2$-free, meaning that it contains a $2 \times 2$ submatrix for which each entry is 1.
We've chosen $L_1$, $L_2$, and $L_3$ to be the left, bottom, and right sides of the triangle, while taking $L_4$, $L_5$, and $L_6$, to be the blue, green, and red lines respectively. %{\color{red}(is there a nicer order?)}.-->> It cannot be symmetric no matter what

\begin{figure}[ht!]
\captionsetup[subfigure]{}
%\centering
\begin{subfigure}[b]{.55\linewidth}
\begin{tikzpicture}[scale=0.7, baseline={([yshift=-.8ex]current bounding box.center)}]]
    \draw[ultra thick] (1,0) -- (4,5.196);
    \draw[ultra thick] (4,5.196) -- (7,0);
    \draw[ultra thick] (7,0) -- (1,0);
    \draw[ultra thick,bend left,blue,dashed,rounded corners] (1,0) to (2,1.732) to (6,1.732) to (5,0);
    \draw[ultra thick,bend left,green,dotted,rounded corners] (4,5.196) to (5,3.464) to (3,0) to (2,1.732);
    \draw[ultra thick,bend left,red,dashdotted,rounded corners] (7,0) to (5,0) to (3,3.464) to (5,3.464);
    \draw[thick,fill=blue] (1,0) circle (0.1);
    \draw[thick,fill=blue] (2,1.732) circle (0.1);
    \draw[thick,fill=blue] (3,3.464) circle (0.1);
    \draw[thick,fill=blue] (4,5.196) circle (0.1);
    \draw[thick,fill=blue] (5,3.464) circle (0.1);
    \draw[thick,fill=blue] (6,1.732) circle (0.1);
    \draw[thick,fill=blue] (7,0) circle (0.1);
    \draw[thick,fill=blue] (3,0) circle (0.1);
    \draw[thick,fill=blue] (5,0) circle (0.1);
    \draw(0.1,0.2) node {\small $p_1$};%{\small $(1,1,1,1)$};
    \draw(1.1,1.932) node {\small $p_2$};%{\small $(1,1,0,1)$};
    \draw(2.1,3.664) node {\small $p_3$};%{\small $(1,1,1,0)$};
    \draw(4,5.496) node {\small $p_4$};%{\small $(1,1,0,0)$};
    \draw(5.9,3.664) node {\small $p_9$};%{\small $(0,1,1,0)$};
    \draw(6.9,1.932) node {\small $p_8$};%{\small $(1,0,0,1)$};
    \draw(7.9,0.2) node {\small $p_7$};%{\small $(0,0,1,1)$};
    \draw(5,-0.6) node {\small $p_6$};%{\small $(1,0,1,1)$};
    \draw(3,-0.6) node {\small $p_5$};%{\small $(0,1,1,1)$};
\end{tikzpicture}
\caption{A $4$-uniform hypergraph.}
\label{fig:hypergraph}
\end{subfigure}
\begin{subfigure}[b]{.4\linewidth}
$\kbordermatrix{
&\bm{p_1}&\bm{p_2}&\bm{p_3}&\bm{p_4}&\bm{p_5}&\bm{p_6}&\bm{p_7}&\bm{p_8}&\bm{p_9}\\
\bm{L_1}&1&1&1&1&0&0&0&0&0\\
\bm{L_2}&1&0&0&0&1&1&1&0&0\\
\bm{L_3}&0&0&0&1&0&0&1&1&1\\
{\color{blue}\bm{L_4}}&1&1&0&0&0&1&0&1&0\\
{\color{green}\bm{L_5}}&0&1&0&1&1&0&0&0&1\\
{\color{red}\bm{L_6}}&0&0&1&0&0&1&1&0&1
}$
\subcaption{An incidence matrix.}
\label{fig:incmatrix}
\end{subfigure}
\caption{}
\label{fig:hypergraphmatrix}
\end{figure}
Since every line $L\in \LL$ is a collection of points from $\PP$ we can view $L$ as being in $\Zb\PP$ by summing the points that make up $L$.  This is just the transpose of the incidence map, $A^\T:\Zb\LL\rightarrow \Zb\PP$ which is $A^\T L=\sum_{p\in L} p$.  It turns out, determining whether or not $\Gamma$ can be made magic over some group is determined by  the image of $A^\T$ or, equivalently, the $\Zb$-row space of $A$.

Magic labelings of $\Gamma$ are functions on $\PP$, so it will be useful to consider $\Zb^\PP$ (\textit{resp.} $\Zb^\LL$), the space of $\Zb$-valued functions on $\PP$ (\textit{resp.} $\LL$).  It will be useful to extend $\Zb$-valued functions $f$ on $\PP$ to functions on $\Zb\PP$. We do this by
\[f\left(\sum_{p\in P}a_p p\right)=\sum_{p\in P}a_p f(p).\]
These functions can naturally be identified with $\Zb\PP$ (\textit{resp.} $\Zb\LL$) by the map
 \begin{eqnarray*}
 [\cdot]:\Zb^\PP&\rightarrow& \Zb\PP\\
 f&\mapsto & \sum_{p\in \PP} f(p) p
 \end{eqnarray*}
 and similarly for $[\cdot]:\Zb^\LL\rightarrow \Zb\LL$.  Depending on the context, it may be more useful to consider functions and at other times consider combinations of points and lines.

We usually will not be working over $\Zb$, but rather over abelian groups.  Since every abelian group $G$ is a $\Zb$-module, the notions of $\Zb\PP$ and $\Zb\LL$ extend naturally to $G\PP$ and $G\LL$, and we can view the incidence map as a $G$-map.  When we do this, we will denote the incidence map/matrix as $A_G$.  In the special case where $G=\Zb/r\Zb$ for some $r\in \Nb$ we will just write $A_r$.

A $G$-labeling of $\Gamma$ can be represented as a function $f:\PP \to G$.    Define $h\in \Zb^\LL$ by  \[[h]=A_G[f] = \sum_{{1\leq i \leq m}\atop {p \in L_i}} f(p)L_i\] as a function on the lines in $\LL$.  Then $h(L_i)=\sum_{p\in L_i} f(p)$.   From this perspective, we consider $f$ as a function on the lines by $f(A^\T L)=\sum_{p \in L} f(p)$ instead. % In fact, it actually makes sense to extend our function $f$ linearly to any integer linear combination of the points in $\PP$ by setting $f\left(\sum_{p \in \PP} a_p p\right) = \sum_{p \in \PP} a_p f(p)$. In a similar way, we may also extend $f$ linearly to integer linear combinations of lines.  %Finally, we also think of $f$ as acting on the standard basis vectors $\vec{e}_1, \dots, \vec{e}_n \in \Zb^n$ by setting $f(\vec{e}_j) = f(p_j)$ for each $j$ and then extend linearly to the rest of $\Zb^n$.

We now introduce some terms  in order to get at the idea of ``magicness''.  A function $f:\PP \to G$ is called \emph{line-invariant} if there exists a constant $c \in G$ such that $f(A^\T L) = c$ for all $L \in \LL$.  Such a constant is called the \emph{magic constant} for $f$.  If, furthermore, we have that $f(p) \neq f(q)$ for all $p\neq q \in \PP$, then we say that $f$ is a \emph{magic} function.  A $k$-uniform hypergraph $\Gamma=(\PP, \LL)$ is said to be \emph{magic over} an abelian group $G$ %or it \emph{admits a magic labeling} to $G$,
if there exists a magic function $f:\PP \to G$.  Observe that since $k$-uniform hypergraphs have the same number of points on each line, every constant function $f(p)=c$, for any abelian group $G$ and any $c \in G$, is line-invariant.  A non-constant example is the 4-uniform hypergraph from Figure~\ref{fig:hypergraph}, which is magic over $(\Zb/2\Zb)^4$ with magic labeling given in Figure~\ref{fig:magicHG}.

\begin{figure}[ht!]
\captionsetup[subfigure]{labelformat=simple}
\centering
\begin{tikzpicture}[scale=0.7]
    \draw[ultra thick] (1,0) -- (4,5.196);
    \draw[ultra thick] (4,5.196) -- (7,0);
    \draw[ultra thick] (7,0) -- (1,0);
    \draw[ultra thick,bend left,blue,dashed,rounded corners] (1,0) to (2,1.732) to (6,1.732) to (5,0);
    \draw[ultra thick,bend left,green,dotted,rounded corners] (4,5.196) to (5,3.464) to (3,0) to (2,1.732);
    \draw[ultra thick,bend left,red,dashdotted,rounded corners] (7,0) to (5,0) to (3,3.464) to (5,3.464);
    \draw[thick,fill=blue] (1,0) circle (0.1);
    \draw[thick,fill=blue] (2,1.732) circle (0.1);
    \draw[thick,fill=blue] (3,3.464) circle (0.1);
    \draw[thick,fill=blue] (4,5.196) circle (0.1);
    \draw[thick,fill=blue] (5,3.464) circle (0.1);
    \draw[thick,fill=blue] (6,1.732) circle (0.1);
    \draw[thick,fill=blue] (7,0) circle (0.1);
    \draw[thick,fill=blue] (3,0) circle (0.1);
    \draw[thick,fill=blue] (5,0) circle (0.1);
    \draw(-0.1,0.2) node {\small $(1,1,1,1)$};
    \draw(0.9,1.932) node {\small $(1,1,0,1)$};
    \draw(1.9,3.664) node {\small $(1,1,1,0)$};
    \draw(4,5.596) node {\small $(1,1,0,0)$};
    \draw(6.1,3.664) node {\small $(0,1,1,0)$};
    \draw(7.1,1.932) node {\small $(1,0,0,1)$};
    \draw(8.1,0.2) node {\small $(0,0,1,1)$};
    \draw(5,-0.6) node {\small $(1,0,1,1)$};
    \draw(3,-0.6) node {\small $(0,1,1,1)$};
\end{tikzpicture}
\caption{A magic labeling over $(\Zb/2\Zb)^4$.}
\label{fig:magicHG}
\end{figure}
Line-invariant functions are closely tied to $\text{Ker} A_G$.  If $[h]=A_G[f]=0$ for some $f:\PP\rightarrow G$ then $f(A^\T L)=h(L)=0$ for all $L\in\LL$.  Therefore, $f$ is line-invariant with magic constant $0$.  The converse also holds, and so $\text{Ker} A_G$ is the space of all line-invariant functions with magic constant $0$.  In the case of $k$-uniform hypergraphs where $gcd(k, |G|)=1$ one can obtain all line invariant functions by adding the constant functions to $\text{Ker} A_G$.

\subsection*{Substructures}
The existence of certain substructures or subconfigurations within a hypergraph places restrictions on the potential line-invariant labelings.  When considering the unique $7_3$-configuration -- the Fano configuration --  removing any point and the three lines incident to that point results in what is called a complete quadrilateral (see Figure~\ref{fig:fano}).  A \emph{complete quadrilateral}, $\QQ=(\PP_\QQ, \LL_\QQ)$ is an arrangement of points and lines consisting of four lines, no three of which pass through the same point, and the six points of intersection of these lines. So $\QQ$ is a $(6_2,4_3)$-configuration in its own right.  Containing this substructure rules out many possible groups over which a magic labeling could exist due to the following  observation.

\begin{figure}[ht!]
\captionsetup[subfigure]{}
\centering
\begin{subfigure}[b]{.25\linewidth}
\begin{tikzpicture}[scale=0.8]%[baseline={([yshift=-.8ex]current bounding box.center)}]
    \draw[ultra thick] (1,0) -- (5,0);
    \draw[ultra thick] (5,0) -- (3,3.464);
    \draw[ultra thick] (1,0) -- (3,3.464);
    \draw[ultra thick] (3,0) -- (3,3.464);
    \draw[ultra thick] (2,1.732) -- (5,0);
    \draw[ultra thick] (1,0) -- (4,1.732);
    \draw[ultra thick] (3,1.155) circle(1.155);
    \draw[thick,fill=blue] (1,0) circle (0.1);
    \draw[thick,fill=blue] (2,1.732) circle (0.1);
    \draw[thick,fill=blue] (3,0) circle (0.1);
    \draw[thick,fill=blue] (3,1.155) circle (0.1);
    \draw[thick,fill=blue] (3,3.464) circle (0.1);
    \draw[thick,fill=blue] (4,1.732) circle (0.1);
    \draw[thick,fill=blue] (5,0) circle (0.1);
    %\draw(0.6,0) node {\small $p_1$};
    %\draw (1.6,1.732) node {\small $p_6$};
    %\draw (3,-0.4) circle node {\small $p_5$};
    %\draw (3.7,1.155) node {\tiny $(1,1,1)$};
    %\draw (2.6,1.155) node {\small $p_7$};
    %\draw (3,3.864) node {\small $p_2$};
    %\draw (4.4,1.732) node {\small $p_4$};
    %\draw (5.4,0) node {\small $p_3$};
\end{tikzpicture}
\subcaption{The Fano $7_3$-configuration.\\}
\end{subfigure}
\hskip 1.5cm
\begin{subfigure}[b]{.4\linewidth}
\begin{tikzpicture}[scale=0.8]%[baseline={([yshift=-.8ex]current bounding box.center)}]
    \draw[ultra thick] (5,0) -- (3,3.464);
    \draw[ultra thick] (1,0) -- (3,3.464);
    \draw[ultra thick] (2,1.732) -- (5,0);
    \draw[ultra thick] (1,0) -- (4,1.732);
    \draw[thick,fill=blue] (1,0) circle (0.1);
    \draw[thick,fill=blue] (2,1.732) circle (0.1);
    \draw[thick,fill=blue] (3,1.155) circle (0.1);
    \draw[thick,fill=blue] (3,3.464) circle (0.1);
    \draw[thick,fill=blue] (4,1.732) circle (0.1);
    \draw[thick,fill=blue] (5,0) circle (0.1);
    \draw(0.5,0) node {$p$};
    \draw(5.5,0) node {$q$};
\end{tikzpicture}
\subcaption{The complete quadrilateral obtained by removing a point and its three incident lines.}
\end{subfigure}
\begin{comment}
$\kbordermatrix{
&p_1&p_2&p_3&p_4&p_5&p_6&p_7\\
\ell_1&0&1&1&1&0&0&0\\
\ell_2&1&0&1&0&1&0&0\\
\ell_3&1&1&0&0&0&1&0\\
\ell_4&1&0&0&1&0&0&1\\
\ell_5&0&1&0&0&1&0&1\\
\ell_6&0&0&1&0&0&1&1\\
\ell_7&0&0&0&1&1&1&0
}$
\end{comment}
\caption{}%{The Fano $7_3$-configuration and a complete quadrilateral obtained by removing a point and its three incident lines.}
\label{fig:fano}
\end{figure}

\begin{prop}\label{prop:quad}
Let $f:\PP \to G$ be any line-invariant function and let $\QQ$ be a complete quadrilateral in $\Gamma$.  If $p$ and $q$ are the two points in $\QQ$ that are not incident to a common line, then $2f(p)=2f(q)$.
\end{prop}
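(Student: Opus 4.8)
The plan is to exploit line-invariance by forming a signed combination of the four defining lines of $\QQ$ in which every point other than $p$ and $q$ cancels. First I would fix notation: write $\LL_\QQ=\{L_1,L_2,L_3,L_4\}$ for the four lines of the complete quadrilateral, so that its six points are exactly the pairwise intersections $L_i\cap L_j$. Because no three of the $L_i$ pass through a common point, each point of $\QQ$ lies on exactly two of the four lines; this is just the assertion (already noted) that $\QQ$ is a $(6_2,4_3)$-configuration.

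Next I would locate $p$ and $q$ inside this structure. The point $p$ lies on two of the lines, say $L_1$ and $L_2$. Since $p$ and $q$ are not incident to a common line, $q$ cannot lie on $L_1$ or $L_2$, so $q$ must lie on the remaining two lines $L_3$ and $L_4$. The other four points of $\QQ$ are then precisely $L_1\cap L_3$, $L_1\cap L_4$, $L_2\cap L_3$, and $L_2\cap L_4$, so each of them lies on exactly one of $\{L_1,L_2\}$ and exactly one of $\{L_3,L_4\}$.

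Now consider the quantity $S=f(A^\T L_1)+f(A^\T L_2)-f(A^\T L_3)-f(A^\T L_4)$. On one hand, since $f$ is line-invariant with some magic constant $c$, each of the four terms equals $c$, so $S=c+c-c-c=0$. On the other hand, $f(A^\T L_i)=\sum_{r\in L_i}f(r)$, so I would tally the coefficient of each point of $\QQ$ in $S$: the point $p$, lying on $L_1$ and $L_2$, contributes $2f(p)$; the point $q$, lying on $L_3$ and $L_4$, contributes $-2f(q)$; and each of the four remaining points contributes $+f(r)-f(r)=0$ by the incidence count of the previous paragraph. Hence $S=2f(p)-2f(q)$, and comparing with $S=0$ yields $2f(p)=2f(q)$.

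The arithmetic here is elementary; the only step that needs care is the combinatorial bookkeeping in the last paragraph, namely verifying that apart from $p$ and $q$ every point of the quadrilateral sits on exactly one ``plus'' line and one ``minus'' line, so that its contribution to $S$ cancels. This in turn rests entirely on the no-three-concurrent hypothesis, which forces each point onto exactly two lines and thereby pins down which lines pass through $p$, through $q$, and through each of the four other points.
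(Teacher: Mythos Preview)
Your proof is correct and follows essentially the same approach as the paper's: both choose the two lines through $p$ with positive sign and the two through $q$ with negative sign, then compute $f(A^\T L_1)+f(A^\T L_2)-f(A^\T L_3)-f(A^\T L_4)$ two ways. The paper compresses the cancellation step into the single identity $L_1+L_2-L_3-L_4=2p-2q$, whereas you spell out the incidence bookkeeping explicitly; the substance is identical.
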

\begin{proof}
Let $L_1$, $L_2$ be the lines incident to $p$ and let $L_3$, $L_4$ be the lines incident to $q$.  Applying $f$ to the linear combination $L_1 + L_2 - L_3 - L_4$ we have
$$0 = f(L_1+L_2-L_3-L_4) = f(2p-2q) = 2f(p)-2f(q),$$
hence the result follows.
\end{proof}

For the Fano $7_3$-configuration specifically, we see as a corollary that, $2f(p)=2f(q)$ for all pairs of points $p,q \in \PP$ and all line-invariant functions $f$.  This is because given any pair of points $p$ and $q$ there exists a line connecting them and thus, by removing the third point on that line, we are left with a complete quadrilateral with $p$ and $q$ in the appropriate positions.  More generally though, any time such a structure exists within our hypergraph $\Gamma$, it follows that $\Gamma$ is not magic over any group which does not have 2 as a zero divisor%(for example, any torsion-free groups)
.  For the Fano $7_3$-configuration, it is known \cite{NN2} that the smallest group over which it admits a magic labeling is $(\Zb/2\Zb)^3$.

There are also substructures which forbid all line-invariant functions $f$ from being injective which we call \emph{forbidden substructures}.  Containing a forbidden substructure makes it impossible for a hypergraph to be magic over any abelian group.  More specifically, forbidden substructures create restrictions by admitting integer linear combinations of their lines which result in a difference of two points $p-q$, i.e.\ $p-q$ is in the image of $[\cdot]$.  For example, the smallest $n_3$-configuration which is not magic over any abelian group $G$ is the $9_3$-configuration which is denoted as configuration $(9_3)_3$ in \cite{G1} and is given in Figure~\ref{fig:(9_3)_3}.

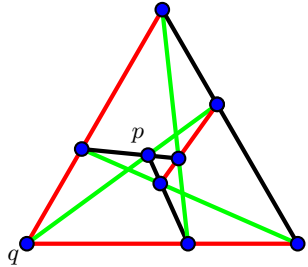
\begin{figure}[ht!]
\captionsetup[subfigure]{labelformat=simple}
\centering
\begin{tikzpicture}[scale=0.9]
\draw[ultra thick, red] (0,0) -- (2,3.46);
\draw[ultra thick, red] (0,0) -- (4, 0);
\draw[ultra thick, green] (0,0) -- (2.81, 2.06);
\draw[ultra thick, green] (0.81, 1.4) -- (4,0);
\draw[ultra thick] (0.81, 1.4) -- (2.24, 1.26);
\draw[ultra thick] (1.79, 1.31) -- (2.38, 0);
\draw[ultra thick, green] (2,3.46) -- (2.38, 0);
\draw[ultra thick, red] (1.97, 0.89) -- (2.81, 2.06);
\draw[ultra thick] (2, 3.46) -- (4, 0);
\draw[thick,fill=blue] (0,0) circle (0.1);
\draw[thick,fill=blue] (4,0) circle (0.1);
\draw[thick,fill=blue] (2, 3.46) circle (0.1);
\draw[thick,fill=blue] (2.38, 0) circle (0.1);
\draw[thick,fill=blue] (2.81, 2.06) circle (0.1);
\draw[thick,fill=blue] (0.81, 1.4) circle (0.1);
\draw[thick,fill=blue] (2.24, 1.26) circle (0.1);
\draw[thick,fill=blue] (2.81, 2.06) circle (0.1);
\draw[thick,fill=blue] (1.79, 1.31) circle (0.1);
\draw[thick,fill=blue] (1.97, 0.89) circle (0.1);
\draw(-0.2,-0.2) node {\small $q$};
\draw(1.64,1.61) node {\small $p$};
\end{tikzpicture}
\caption{The smallest configuration which is not magic over any abelian group.}
\label{fig:(9_3)_3}
\end{figure}

Observe, that if we take the sum of the green lines and subtract the three red lines then we are left with exactly $p-q$.  Hence, if $f \colon \mathcal{P} \to G$ is any line-invariant function, then applying $f$ to this linear combination shows us that $f(p)=f(q)$.  We reproduce this subconfiguration more clearly (replacing green lines with black ones) in Figure~\ref{fig:subconfig1} together with another in Figure~\ref{fig:subconfig2}.  We have found that the majority of $n_3$-configurations for $n=11$, 12, 13, and 14 contain at least one of these two small examples of forbidden substructures.  %These two subconfigurations %, which both forbid magic labelings over any abelian group, 
%are examples of forbidden substructures, at least one of which can be found in the majority of $n_3$-configurations for $n=11$, 12, 13, and 14.  
In each case, we get the difference $p-q$ by adding the black lines and subtracting the red ones.  %Summing along the three red lines, we find that $\sum_{p \in \mathcal{P}} f(p) = 3c$. But, summing along the three green lines, we obtain $\sum_{q \in \mathcal{P}} f(q) + f(p) - f(p^\prime) = 3c$. Hence $f(p) = f(p^\prime)$, and so this configuration cannot be magic over any abelian group $G$. Perhaps this configuration could be magic over some Abelian group under a different homogeneous function that is not additive; this area of exploration remains open.

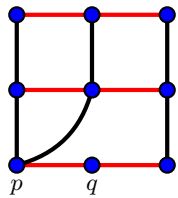
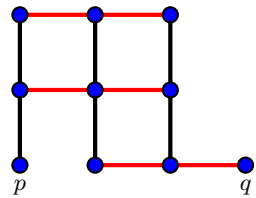
\begin{figure}[ht!]
\captionsetup[subfigure]{}
\centering
\begin{subfigure}[b]{.28\linewidth}
\begin{tikzpicture}%[scale=1.2,baseline={([yshift=-.8ex]current bounding box.center)}]
    \draw[ultra thick,red] (1,0) -- (3,0);
    \draw[ultra thick,red] (1,1) -- (3,1);
    \draw[ultra thick,red] (1,2) -- (3,2);
    \draw[ultra thick] (1,0) -- (1,2);
    \draw[ultra thick,bend right] (1,0) to (2,1);
    \draw[ultra thick] (2,1) -- (2,2);
    \draw[ultra thick] (3,0) -- (3,2);
    \draw[thick,fill=blue] (1,0) circle (0.1);
    \draw[thick,fill=blue] (2,0) circle (0.1);
    \draw[thick,fill=blue] (3,0) circle (0.1);
    \draw[thick,fill=blue] (1,1) circle (0.1);
    \draw[thick,fill=blue] (2,1) circle (0.1);
    \draw[thick,fill=blue] (3,1) circle (0.1);
    \draw[thick,fill=blue] (1,2) circle (0.1);
    \draw[thick,fill=blue] (2,2) circle (0.1);
    \draw[thick,fill=blue] (3,2) circle (0.1);
    \draw(1,-0.3) node {\small $p$};
    \draw(2,-0.3) node {\small $q$};
    \draw[white] (0.5,0) node{a}; %for spacing!
\end{tikzpicture}
\subcaption{A forbidden substructure derived from a $9_3$-configuration.}
\label{fig:subconfig1}
\end{subfigure}
\hskip 1.5cm
\begin{subfigure}[b]{.28\linewidth}
\begin{tikzpicture}%[scale=1.2,baseline={([yshift=-.8ex]current bounding box.center)}]
    \draw[ultra thick,red] (2,0) -- (4,0);
    \draw[ultra thick,red] (1,1) -- (3,1);
    \draw[ultra thick,red] (1,2) -- (3,2);
    \draw[ultra thick] (1,0) -- (1,2);
    \draw[ultra thick] (2,0) -- (2,2);
    \draw[ultra thick] (3,0) -- (3,2);
    \draw[thick,fill=blue] (1,0) circle (0.1);
    \draw[thick,fill=blue] (2,0) circle (0.1);
    \draw[thick,fill=blue] (3,0) circle (0.1);
    \draw[thick,fill=blue] (1,1) circle (0.1);
    \draw[thick,fill=blue] (2,1) circle (0.1);
    \draw[thick,fill=blue] (3,1) circle (0.1);
    \draw[thick,fill=blue] (1,2) circle (0.1);
    \draw[thick,fill=blue] (2,2) circle (0.1);
    \draw[thick,fill=blue] (3,2) circle (0.1);
    \draw[thick,fill=blue] (4,0) circle (0.1);
    \draw(1,-0.3) node {\small $p$};
    \draw(4,-0.3) node {\small $q$};
\end{tikzpicture}
\subcaption{Another forbidden substructure forcing $f(p)=f(q)$.}
\label{fig:subconfig2}
\end{subfigure}
\caption{Forbidden substructures appearing commonly in $n_3$-configurations.}
\label{fig:subconfigs}
\end{figure}

In the setting of $k$-uniform hypergraphs, these types of forbidden substructures can be generalized and there are many other types of forbidden substructures as well.  For example, any pair of lines sharing $k-1$ common points forces the two additional points to be equal.  See Figure~\ref{fig:substructure1} for this example and Figures~\ref{fig:substructure2} and \ref{fig:substructure3} generalizations of the previous two.

\begin{figure}[ht!]
\captionsetup[subfigure]{}
\centering
\begin{subfigure}[b]{.28\linewidth}
\begin{tikzpicture}[scale=0.75,baseline={([yshift=-2ex]current bounding box.center)}]
    \draw[ultra thick,red,bend left] (0,0) to (1,1);
    \draw[ultra thick,red,bend right] (1,1) to (2,2);
    %\draw[ultra thick,red,bend left] (2,2) to (3,3);
    \draw[ultra thick,bend right] (0,0) to (1,1);
    \draw[ultra thick,bend left] (1,1) to (2,2);
    %\draw[ultra thick,bend right] (2,2) to (3,3);
    \draw[dotted,thick] (2,2) to (3,3);
    \draw[ultra thick,red] (3,3) -- (4,3);
    \draw[ultra thick] (3,3) -- (3,4);
    \draw[thick,fill=blue] (0,0) circle (0.133);
    \draw[thick,fill=blue] (1,1) circle (0.133);
    \draw[thick,fill=blue] (2,2) circle (0.133);
    \draw[thick,fill=blue] (3,3) circle (0.133);
    %\draw[thick,fill=blue] (4,4) circle (0.1);
    \draw[thick,fill=blue] (3,4) circle (0.133);
    \draw[thick,fill=blue] (4,3) circle (0.133);
    \draw(2.6,4) node {\small $p$};
    \draw(4,2.6) node {\small $q$};
\end{tikzpicture}
\subcaption{A forbidden substructure.}
\label{fig:substructure1}
\end{subfigure}
\qquad
\begin{subfigure}[b]{.28\linewidth}
\begin{tikzpicture}[scale=1.5,baseline={([yshift=-.8ex]current bounding box.center)}]
    \draw[ultra thick,red] (0,2) -- (0.67,2);
    \draw[thick,red,dotted] (0.67,2) -- (1.5,2);
    \draw[ultra thick,red] (1.33,2) -- (1.67,2);
    \draw[ultra thick,red] (0,1.67) -- (0.67,1.67);
    \draw[thick,red,dotted] (0.67,1.67) -- (1.5,1.67);
    \draw[ultra thick,red] (1.33,1.67) -- (1.67,1.67);
    \draw[ultra thick,red] (0,1.33) -- (0.67,1.33);
    \draw[thick,red,dotted] (0.67,1.33) -- (1.5,1.33);
    \draw[ultra thick,red] (1.33,1.33) -- (1.67,1.33);
    \draw[ultra thick,red] (0,0.5) -- (0.67,0.5);
    \draw[thick,red,dotted] (0.67,0.5) -- (1.5,0.5);
    \draw[ultra thick,red] (1.33,0.5) -- (1.67,0.5);
    \draw[ultra thick,red] (0,0) -- (0.67,0);
    \draw[thick,red,dotted] (0.67,0) -- (1.5,0);
    \draw[ultra thick,red] (1.33,0) -- (1.67,0);
    \draw[ultra thick] (0,1.33) -- (0,2);
    \draw[dotted,thick] (0,0.5) -- (0,1.3);
    \draw[ultra thick] (0,0) -- (0,0.5);
    \draw[thick,fill=blue] (0,0) circle (0.0667);
    \draw[thick,fill=blue] (0,0.5) circle (0.0667);
    \draw[thick,fill=blue] (0,1.33) circle (0.0667);
    \draw[thick,fill=blue] (0,1.67) circle (0.0667);
    \draw[thick,fill=blue] (0,2) circle (0.0667);
    \draw[ultra thick] (0.33,1.33) -- (0.33,2);
    \draw[dotted,thick] (0.33,0.5) -- (0.33,1.33);
    \draw[ultra thick,bend right] (0,0) to (0.33,0.5);
    \draw[thick,fill=blue] (0.33,0) circle (0.0667);
    \draw[thick,fill=blue] (0.33,0.5) circle (0.0667);
    \draw[thick,fill=blue] (0.33,1.33) circle (0.0667);
    \draw[thick,fill=blue] (0.33,1.67) circle (0.0667);
    \draw[thick,fill=blue] (0.33,2) circle (0.0667);
    \draw[ultra thick] (0.67,1.3) -- (0.67,2);
    \draw[dotted,thick] (0.67,0.5) -- (0.67,1.33);
    \draw[ultra thick] (0.67,0) -- (0.67,0.5);
    \draw[thick,fill=blue] (0.67,0) circle (0.0667);
    \draw[thick,fill=blue] (0.67,0.5) circle (0.0667);
    \draw[thick,fill=blue] (0.67,1.33) circle (0.0667);
    \draw[thick,fill=blue] (0.67,1.67) circle (0.0667);
    \draw[thick,fill=blue] (0.67,2) circle (0.0667);
    \draw[ultra thick] (1.33,1.33) -- (1.33,2);
    \draw[dotted,thick] (1.33,0.5) -- (1.33,1.3);
    \draw[ultra thick] (1.33,0) -- (1.33,0.5);
    \draw[thick,fill=blue] (1.33,0) circle (0.0667);
    \draw[thick,fill=blue] (1.33,0.5) circle (0.0667);
    \draw[thick,fill=blue] (1.33,1.33) circle (0.0667);
    \draw[thick,fill=blue] (1.33,1.67) circle (0.0667);
    \draw[thick,fill=blue] (1.33,2) circle (0.0667);
    \draw[ultra thick] (1.67,1.33) -- (1.67,2);
    \draw[dotted,thick] (1.67,0.5) -- (1.67,1.3);
    \draw[ultra thick] (1.67,0) -- (1.67,0.5);
    \draw[thick,fill=blue] (1.67,0) circle (0.0667);
    \draw[thick,fill=blue] (1.67,0.5) circle (0.0667);
    \draw[thick,fill=blue] (1.67,1.33) circle (0.0667);
    \draw[thick,fill=blue] (1.67,1.67) circle (0.0667);
    \draw[thick,fill=blue] (1.67,2) circle (0.0667);
    \draw(0,-0.3) node {\small $p$};
    \draw(0.33,-0.3) node {\small $q$};
    \draw[white] (-0.2,0) node {a}; %for spacing
\end{tikzpicture}
\subcaption{A generalization of Figure~\ref{fig:subconfig1}.}
\label{fig:substructure2}
\end{subfigure}
\qquad
\begin{subfigure}[b]{.28\linewidth}
\begin{tikzpicture}[scale=1.5,baseline={([yshift=-.8ex]current bounding box.center)}]
    \draw[ultra thick,red] (0,2) -- (0.67,2);
    \draw[thick,red,dotted] (0.67,2) -- (1.5,2);
    \draw[ultra thick,red] (1.33,2) -- (1.67,2);
    \draw[ultra thick,red] (0,1.67) -- (0.67,1.67);
    \draw[thick,red,dotted] (0.67,1.67) -- (1.5,1.67);
    \draw[ultra thick,red] (1.33,1.67) -- (1.67,1.67);
    \draw[ultra thick,red] (0,1.33) -- (0.67,1.33);
    \draw[thick,red,dotted] (0.67,1.33) -- (1.5,1.33);
    \draw[ultra thick,red] (1.33,1.33) -- (1.67,1.33);
    \draw[ultra thick,red] (0,0.5) -- (0.67,0.5);
    \draw[thick,red,dotted] (0.67,0.5) -- (1.5,0.5);
    \draw[ultra thick,red] (1.33,0.5) -- (1.67,0.5);
    \draw[ultra thick,red] (0.33,0) -- (0.67,0);
    \draw[thick,red,dotted] (0.67,0) -- (1.5,0);
    \draw[ultra thick,red] (1.33,0) -- (2,0);
    \draw[ultra thick] (0,1.33) -- (0,2);
    \draw[dotted,thick] (0,0.5) -- (0,1.3);
    \draw[ultra thick] (0,0) -- (0,0.5);
    \draw[thick,fill=blue] (0,0) circle (0.0667);
    \draw[thick,fill=blue] (0,0.5) circle (0.0667);
    \draw[thick,fill=blue] (0,1.33) circle (0.0667);
    \draw[thick,fill=blue] (0,1.67) circle (0.0667);
    \draw[thick,fill=blue] (0,2) circle (0.0667);
    \draw[ultra thick] (0.33,1.33) -- (0.33,2);
    \draw[dotted,thick] (0.33,0.5) -- (0.33,1.33);
    \draw[ultra thick] (0.33,0) to (0.33,0.5);
    \draw[thick,fill=blue] (0.33,0) circle (0.0667);
    \draw[thick,fill=blue] (0.33,0.5) circle (0.0667);
    \draw[thick,fill=blue] (0.33,1.33) circle (0.0667);
    \draw[thick,fill=blue] (0.33,1.67) circle (0.0667);
    \draw[thick,fill=blue] (0.33,2) circle (0.0667);
    \draw[ultra thick] (0.67,1.3) -- (0.67,2);
    \draw[dotted,thick] (0.67,0.5) -- (0.67,1.33);
    \draw[ultra thick] (0.67,0) -- (0.67,0.5);
    \draw[thick,fill=blue] (0.67,0) circle (0.0667);
    \draw[thick,fill=blue] (0.67,0.5) circle (0.0667);
    \draw[thick,fill=blue] (0.67,1.33) circle (0.0667);
    \draw[thick,fill=blue] (0.67,1.67) circle (0.0667);
    \draw[thick,fill=blue] (0.67,2) circle (0.0667);
    \draw[ultra thick] (1.33,1.33) -- (1.33,2);
    \draw[dotted,thick] (1.33,0.5) -- (1.33,1.3);
    \draw[ultra thick] (1.33,0) -- (1.33,0.5);
    \draw[thick,fill=blue] (1.33,0) circle (0.0667);
    \draw[thick,fill=blue] (1.33,0.5) circle (0.0667);
    \draw[thick,fill=blue] (1.33,1.33) circle (0.0667);
    \draw[thick,fill=blue] (1.33,1.67) circle (0.0667);
    \draw[thick,fill=blue] (1.33,2) circle (0.0667);
    \draw[ultra thick] (1.67,1.33) -- (1.67,2);
    \draw[dotted,thick] (1.67,0.5) -- (1.67,1.3);
    \draw[ultra thick] (1.67,0) -- (1.67,0.5);
    \draw[thick,fill=blue] (1.67,0) circle (0.0667);
    \draw[thick,fill=blue] (1.67,0.5) circle (0.0667);
    \draw[thick,fill=blue] (1.67,1.33) circle (0.0667);
    \draw[thick,fill=blue] (1.67,1.67) circle (0.0667);
    \draw[thick,fill=blue] (1.67,2) circle (0.0667);
    \draw[thick,fill=blue] (2,0) circle (0.0667);
    \draw(0,-0.3) node {\small $p$};
    \draw(2,-0.3) node {\small $q$};
\end{tikzpicture}
\subcaption{A generalization of Figure~\ref{fig:subconfig2}.}
\label{fig:substructure3}
\end{subfigure}
\caption{}%{Some more general substructures forcing $f(p)=f(q)$.}
\label{fig:substructures}
\end{figure}

Searching for particular substructures within complicated hypergraphs is computationally time consuming.  Furthermore, it may not provide definitive information as %we do not have any exhaustive list of substructures {\color{red}(``
it is unclear whether there is a finite list of forbidden substructures which accounts for all $k$-uniform hypergraphs which are never magic.  In fact, we do not even know if there is such a finite list in the special case of $n_3$-configurations. %which would preclude a group from being magic.  % instead?)}.
With that in mind, it would be helpful to have a definitive, computational way to determine whether a given $k$-uniform hypergraph can be magically labeled over some abelian group.

\subsection*{When $\bm{\Gamma}$ is Not Magic}

If we can find a pair of points $p$ and $q$ that have equal values for all line-invariant functions, then our $k$-uniform hypergraph cannot be magic over any group.  The somewhat surprising result is that the converse is also true, i.e.\ if a $k$-uniform hypergraph cannot be magic over any group, then there must exist a fixed pair of points $p$ and $q$ that have equal values under \emph{every} line-invariant function.  To see this, suppose that no such pair of points exists.  Then for every pair of points $p,q \in \PP$ there exists some abelian group $G_{p,q}$ and some line-invariant function $f_{p,q}:\PP \to G_{p,q}$ such that $f_{p,q}(p)  \neq f_{p,q}(q)$.  Thus, the function
$$f=\bigoplus_{(p,q)\in\PP} f_{p,q}:\PP \to \bigoplus_{(p,q) \in \PP} G_{p,q}$$
%defined as the tuple of maps $\nu_{p,q}$ for all pairs of points $(p,q) \in \Pi$
is both line-invariant and distinguishes all points.  Hence, we have found a group over which the configuration $\Gamma$ is magic.

\begin{theorem}\label{thm:equalpair} A $k$-uniform hypergraph $\Gamma=(\PP,\LL)$ is not magic over any abelian group if and only if there exists a pair of points $p,q \in \PP$ such that, for all abelian groups $G$ and all line-invariant functions $f:\PP \to G$, we have $f(p)=f(q)$.
\end{theorem}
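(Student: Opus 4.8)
The plan is to prove the two implications separately; the substantive direction has essentially been carried out in the paragraph preceding the statement, so the proof mainly amounts to organizing that argument and checking two routine facts.

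For the ``if'' direction, suppose such a pair $p \neq q$ exists. Any magic function $f \colon \PP \to G$ is in particular line-invariant, so by hypothesis $f(p) = f(q)$; but this contradicts the injectivity that is built into the definition of a magic function. Hence no magic function exists over any abelian group $G$, i.e.\ $\Gamma$ is not magic over any abelian group.

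For the ``only if'' direction I would prove the contrapositive. Assume that for every pair of distinct points $p \neq q$ in $\PP$ there is some abelian group $G_{p,q}$ and some line-invariant function $f_{p,q} \colon \PP \to G_{p,q}$ with $f_{p,q}(p) \neq f_{p,q}(q)$. Letting the index $\{p,q\}$ range over all unordered pairs of distinct points of $\PP$, set $G = \bigoplus_{\{p,q\}} G_{p,q}$ and define $f = \bigoplus_{\{p,q\}} f_{p,q} \colon \PP \to G$ coordinatewise, so that the $\{p,q\}$-coordinate of $f(x)$ is $f_{p,q}(x)$.

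It then remains to check two things. First, $f$ is line-invariant: writing $c_{p,q} \in G_{p,q}$ for the magic constant of $f_{p,q}$, we have $f(A^\T L) = (f_{p,q}(A^\T L))_{\{p,q\}} = (c_{p,q})_{\{p,q\}}$ for every $L \in \LL$, a fixed element of $G$. Second, $f$ is injective: for distinct $x, y \in \PP$ the values $f(x)$ and $f(y)$ already disagree in the $\{x,y\}$-coordinate, since $f_{x,y}(x) \neq f_{x,y}(y)$. Therefore $f$ is a magic function and $\Gamma$ is magic over $G$, which establishes the contrapositive and hence the theorem. I do not expect a genuine obstacle here: the only care needed is to index over \emph{distinct} pairs, so that injectivity is actually witnessed, and to observe that line-invariance passes to coordinatewise direct sums.
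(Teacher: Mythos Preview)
Your proof is correct and follows essentially the same approach as the paper: the ``if'' direction is immediate from the definitions, and for the ``only if'' direction you, like the paper, take the contrapositive and build a magic function as the direct sum $f=\bigoplus f_{p,q}$ of separating line-invariant functions into $\bigoplus G_{p,q}$. Your version is in fact a bit more carefully written than the paper's (indexing over unordered pairs of \emph{distinct} points and explicitly verifying both line-invariance and injectivity), but the argument is the same.
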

%\begin{proof} Certainly if such a pair of points exists then no line-invariant functions can ever be magic.  For the converse,
%\end{proof}
This result is theoretical in nature, but it is the basis for a result  which allows one to computationally determine whether or not a $k$-uniform hypergraph can be made magic.  This is done by showing  $f(p)=f(q)$ for every line invariant function if and only if  $p-q\in \im A^\T$.

Showing that a hypergraph cannot be made magic is relatively straight forward. Assume $p, q\in \PP$ with $p-q\in \im A^\T$.  Then there exists a $\Zb$-linear combination of lines, $S$, such that $A^\T S=A^\T\sum_{i=1}^m b_iL_i=p-q$.  Next, let $f$ be a line invariant function to some group $G$ with magic constant $c\in G$.  It follows that $f(A^\T S)=f(p-q)=f(p)-f(q)$.  However, \[f(A^\T S)=\sum_{i=1}^m b_if(A^\T L_i)=c\sum_{i=1}^m b_i.\]
So, $c\sum_{i=1}^m b_i=f(p)-f(q)$.  If we can show $\sum_{i=1}^m b_i=0$ then we will have $f(p)=f(q)$ and the function cannot be magic.

Consider the $\Zb$-linear map $\ast:\Zb\PP\rightarrow \Zb$, where \[\left(\sum_{i=1}^n a_i p_i\right)^\ast=\sum_{i=1}^n a_i.\]
This function is counting, with multiplicity, how many points are in a given linear combination.  For a $k$-uniform hypergraph and any line $L$ we have $[A^\T L]^\ast=k$  since it is just counting how many points are on the line.  Therefore,
\[0=(p-q)^*=(A^\T S)^*=\sum_{i=1}^m b_i(A^\T L_i)^*=k \sum_{i=1}^m b_i,\]
and so $\sum_{i=1}^m b_i=0$ as desired.  This is summarized in the following:

\begin{prop}\label{prop:notmagic}
 A $k$-uniform hypergraph $\Gamma=(\PP, \LL)$ is not magic over any Ableian group $G$ if there are $p, q\in \PP$ with $p-q \in \im A^\T$.   	
\end{prop}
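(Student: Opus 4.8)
The plan is to argue directly toward the criterion of Theorem~\ref{thm:equalpair}: it suffices to show that if $p-q\in\im A^\T$, then $f(p)=f(q)$ for every abelian group $G$ and every line-invariant function $f\colon\PP\to G$, since then no magic function can distinguish $p$ from $q$. So I would fix such a pair $p,q$ and an arbitrary line-invariant $f$ with magic constant $c\in G$, and aim to deduce $f(p)=f(q)$.

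First I would unpack the hypothesis: $p-q\in\im A^\T$ means there is an element $S=\sum_{i=1}^m b_iL_i\in\Zb\LL$ with $A^\T S=p-q$ in $\Zb\PP$. Then, extending $f$ $\Zb$-linearly to $\Zb\PP$, I would evaluate $f(A^\T S)$ in two ways. On one hand $f(A^\T S)=f(p-q)=f(p)-f(q)$. On the other hand, using $\Zb$-linearity and the defining relation $f(A^\T L_i)=\sum_{p\in L_i}f(p)$ together with line-invariance ($f(A^\T L_i)=c$ for every $i$), I get $f(A^\T S)=\sum_{i=1}^m b_i f(A^\T L_i)=c\sum_{i=1}^m b_i$. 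Comparing, $f(p)-f(q)=c\sum_{i=1}^m b_i$.

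The only step carrying real content is showing $\sum_{i=1}^m b_i=0$ in $\Zb$. Here I would introduce the counting functional $\ast\colon\Zb\PP\to\Zb$, $\big(\sum a_ip_i\big)^\ast=\sum a_i$, which is $\Zb$-linear. Applying it gives $(p-q)^\ast=1-1=0$, while $(A^\T S)^\ast=\sum_{i=1}^m b_i(A^\T L_i)^\ast$; since $\Gamma$ is $k$-uniform, each line has exactly $k$ points, so $(A^\T L_i)^\ast=k$ for every $i$, whence $0=(p-q)^\ast=(A^\T S)^\ast=k\sum_{i=1}^m b_i$. As $\Zb$ is an integral domain and $k\ge 1$, this forces $\sum_{i=1}^m b_i=0$, and therefore $f(p)-f(q)=c\cdot 0=0$.

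I do not expect a serious obstacle: once the bookkeeping among $\Zb^\PP$, $\Zb\PP$, and the maps $A^\T$, $\ast$, and the $\Zb$-linear extension of $f$ is in place, the computation is short. The one point to handle carefully is that $\ast$ is well-defined and $\Zb$-linear only on $\Zb\PP$, not on $G\PP$, so the cancellation $k\sum b_i=0\Rightarrow\sum b_i=0$ must be performed over $\Zb$ before passing to $G$; conflating the two settings would be the easy slip. Since $f$ and $G$ were arbitrary, $p$ and $q$ agree under every line-invariant function, so by Theorem~\ref{thm:equalpair} the hypergraph $\Gamma$ is not magic over any abelian group.
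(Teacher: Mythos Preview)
Your proposal is correct and follows essentially the same argument as the paper: write $p-q=A^\T S$ with $S=\sum b_iL_i$, compute $f(A^\T S)$ two ways to obtain $f(p)-f(q)=c\sum b_i$, and then use the $\Zb$-linear counting map $\ast$ together with $k$-uniformity to force $\sum b_i=0$. The only cosmetic difference is that you close by invoking Theorem~\ref{thm:equalpair}, whereas the paper concludes directly that no line-invariant $f$ can be injective; either ending is fine.
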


Since $\im A^\T$ is the $\Zb$-row space of $A$, this is equivalent to having $e_i-e_j$ in the row space of the incidence matrix for some $i \neq j$, where $e_i$ and $e_j$ are standard basis vectors.  It is more difficult to show that if $\Gamma$ is not magic over any abelian group then there must exist a $p, q\in \PP$ with $p-q\in \im A^\T$.  To get at this result we solve the problem modulo $m$ for each $m\in \Nb$ ($m>1$) and then show a solution exists over $\Zb$.  This technique is known as a local-global principle.
   %The proof, however, is somewhat involved and requires two major results, thus we plan to discuss the main sticking points within the proof.

\subsection*{Local-Global Principles}
Local-global principles play an important role in number theory.  The basic idea is that if one wants to guarantee a solution to an equation defined over $\Zb$, one only needs to show a solution modulo $r$ for all $r\in \Nb $ with $r>1$.  It should be noted that this concept often extends to other rings, but we focus here on the results over $\Zb$.

For a linear equation in one variable this result follows from the Chinese Remainder Theorem, but the result is less obvious in other situations.  The local-global principle holds for any single variable polynomial equation over $\Zb$.  However, it does not always hold for multi-variable equations.  For quadratic equations in multiple variables, this result is known as the Hasse-Minkowski theorem, and is the first major result of this kind. For a treatment of this result in English see \cite{Cohen}.  Unfortunately, a local-global principle is often too much to ask for in many settings and breaks down even for cubic equations in multiple variables.  Selmer \cite{Selmer} showed that the equation $3x^3+4y^3+5z^3=0$ has non-zero solutions $\text{mod } r$ and over $\Rb$, but no non-zero solutions over $\Zb$.

In this paper we will be applying a local-global principle for systems of linear equations.  We provide a simplified version of the result below.

\begin{theorem}[\cite{LQ}, Section 2.3]
	For a matrix $M$ and column vector $y$ defined over $\Zb$, if the system $Mx=y$ has a solution $\text{mod } r$ for all $r\in \Nb$, $r>1$, then $Mx=y$ has a solution over $\Zb$.	
\end{theorem}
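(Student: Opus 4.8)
The plan is to reduce to a diagonal system via the Smith Normal Form of $M$, for which solvability over $\Zb$ and over each $\Zb/r\Zb$ can be read off from explicit divisibility conditions, and then to extract the $\Zb$-conditions from the hypothesis by choosing moduli wisely.

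First I would bring $M$ (say an $a\times b$ integer matrix) to Smith Normal Form: choose unimodular $U\in GL_a(\Zb)$ and $V\in GL_b(\Zb)$ with $UMV=D$, where $D$ is supported on the diagonal positions $(1,1),\dots,(s,s)$ with positive entries $d_1\mid d_2\mid\cdots\mid d_s$, and $s$ is the rank of $M$. Since $U$ and $V$ are invertible over every ring $\Zb/r\Zb$ as well as over $\Zb$, the substitution $x=Vz$ shows that $Mx=y$ is solvable over $\Zb$ (resp.\ modulo $r$) precisely when $Dz=Uy$ is solvable over $\Zb$ (resp.\ modulo $r$). Write $c=Uy=(c_1,\dots,c_a)^\T$.

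Next I would record the solvability criteria for this diagonal system. Over $\Zb$, $Dz=c$ has a solution iff $d_i\mid c_i$ for $1\le i\le s$ and $c_i=0$ for $s<i\le a$. Over $\Zb/r\Zb$, the $i$-th equation reads $d_iz_i\equiv c_i\pmod r$ for $i\le s$ (solvable iff $\gcd(d_i,r)\mid c_i$) and $0\equiv c_i\pmod r$ for $i>s$ (solvable iff $r\mid c_i$). Then I would feed in the hypothesis: for each $i>s$, having $r\mid c_i$ for all $r>1$ forces $c_i=0$; and for each $i\le s$, either $d_i=1$ (so $d_i\mid c_i$ automatically) or $d_i>1$ is a legitimate modulus and solvability modulo $d_i$ gives $\gcd(d_i,d_i)=d_i\mid c_i$. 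Hence every $\Zb$-condition holds, so $Dz=c$ — equivalently $Mx=y$ — is solvable over $\Zb$.

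There is no real obstacle here; the points that require a little care are the degenerate cases of the Smith form (invariant factors equal to $1$, and the rows beyond the rank where $c_i$ must vanish) and the remark that the unimodular change of variables is valid simultaneously over $\Zb$ and over all $\Zb/r\Zb$. It is worth noting that the Chinese Remainder Theorem is not even needed: the single modulus $r=d_i$ already certifies divisibility by $d_i$, which is exactly what separates this linear local-global statement from the more delicate nonlinear ones discussed above.
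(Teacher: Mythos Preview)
Your proof is correct. The paper does not actually prove this theorem; it merely cites the result from \cite{LQ}, Section~2.3, and uses it as a black box in the argument that follows. Your Smith Normal Form reduction is the standard and cleanest route: once the system is diagonal, both the $\Zb$-solvability conditions ($d_i\mid c_i$ for $i\le s$, $c_i=0$ for $i>s$) and the mod-$r$ conditions become transparent, and your choice of the single modulus $r=d_i$ (together with an arbitrarily large $r$ for the rows beyond the rank) extracts exactly the required divisibilities. The degenerate cases you flag --- invariant factors equal to $1$ and the tail of zero rows --- are handled correctly, and your observation that the unimodular change of variables is simultaneously valid over $\Zb$ and over every $\Zb/r\Zb$ is the point that makes the reduction legitimate.
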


\subsection*{Some Module Algebra}

The local-global principle states that if we want to know if $p-q\in\im(A^\T)$ we can study the same question for $\im(A_r^\T)$ for $r>1$ instead. A classic result from linear algebra (see e.g.\ \cite{Axler}, Proposition~6.46) states that $\im M^\T = (\ker M)^\perp$ for any matrix defined over a field. However, this result breaks down for matrices defined over a ring, even one as nice as $\Zb$.

For example, consider the matrix
\[M=\left(\begin{array}{cc}
2 & 0\\
0 & 0
\end{array}\right).\]
For this matrix, $\im M^\T$ is spanned by $[2,0]$, $\ker M$ is spanned by $[0,1]$, and $(\ker M)^\perp$ is spanned by $[1,0]$ and hence  $\im M^\T\neq  (\ker M)^\perp$ with respect to the standard inner product.

The classic result that $\im M^\T=  (\ker M)^\perp$ over a field relies on the fact that for a subspace $W$ of a finite dimensional vector space $V$ we have $(W^\perp)^\perp = W$ with respect to an inner product.  This does not hold over most rings.  However, it does hold for $\Zb/r\Zb$.  We have not been able to find a proof in the literature, so we provide one here using the characters of the group $\Zb/r\Zb$.\footnote{The idea for this argument was suggested in response to a question on MathOverflow, see \cite{MO}.}% {\color{red} This argument is based on a discussion from math stack  exchange cite (I can no longer find the page)}.

Working over $\Zb/r\Zb$ for any integer $r>1$, observe that for each vector $\vec{v} = (v_1, v_2, \dots, v_s) \in (\Zb/r\Zb)^s$ we have a cyclotomic character $\chi_{\vec{v}}:(\Zb/r\Zb)^s \to \Cb^\times$ defined by setting \[\chi_{\vec{v}}(x_1, \dots, x_s) = e^{\frac{2\pi i}{n}(v_1x_1 + v_2x_2 + \dots + v_sx_s)}.\]  It follows that two vectors $\vec{v}$ and $\vec{x}$ are orthogonal in $(\Zb/r\Zb)^s$ if and only if $\chi_{\vec{v}}(\vec{x}) = 1$.  For any vector $\vec{x} \in (\Zb/r\Zb)^s$ and any character $\chi$ there is also a natural pairing $(\vec{x},\chi) = \chi(\vec{x})$.  Given any subspace $H$, we define $H^\downfree = \{\chi \mid \chi(\vec{x})=1 \text{ for all }\vec{x} \in H\}$.  Under this pairing, it has been shown (see \cite[Proposition 3.4]{Washington}) that $(H^\downfree)^\downfree = H$ for all subspaces $H \subseteq (\Zb/r\Zb)^s$.  With a one-to-one correspondence between cyclotomic characters and vectors and the fact that $\langle \vec{v},\vec{x} \rangle = 0$ if and only if $\langle \vec{x}, \chi_{\vec{v}} \rangle=1$, it follows that $(V^\perp)^\perp=V$ for all subspaces $V$ when using the standard inner product as well.  This proves the following proposition:

\begin{prop}\label{prop:mperp}
	For a $\Zb/r\Zb$-map $M$, $\im M^\T=  (\ker M)^\perp$.
\end{prop}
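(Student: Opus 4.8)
The plan is to derive Proposition~\ref{prop:mperp} directly from the two ingredients already assembled in the preceding paragraph: the identification of characters of $(\Zb/r\Zb)^s$ with vectors via $\vec v \mapsto \chi_{\vec v}$, and the duality statement $(H^\downfree)^\downfree = H$ for every subspace $H \subseteq (\Zb/r\Zb)^s$. The key observation to make explicit is that under the identification $\chi_{\vec v} \leftrightarrow \vec v$, the annihilator $H^\downfree$ corresponds precisely to the orthogonal complement $H^\perp$ with respect to the standard bilinear form, since $\chi_{\vec v}(\vec x) = 1$ exactly when $\langle \vec v, \vec x\rangle = 0$ in $\Zb/r\Zb$. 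Hence $(H^\downfree)^\downfree = H$ transports to $(H^\perp)^\perp = H$ for all subspaces (submodules) $H$ of $(\Zb/r\Zb)^s$, for each $r > 1$.

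With $(H^\perp)^\perp = H$ in hand, I would argue as in the classical field case. Let $M$ be a $\Zb/r\Zb$-map, viewed as a matrix. The inclusion $\im M^\T \subseteq (\ker M)^\perp$ is the easy direction: if $\vec y = M^\T \vec z$ and $\vec x \in \ker M$, then $\langle \vec y, \vec x\rangle = \langle M^\T \vec z, \vec x\rangle = \langle \vec z, M\vec x\rangle = \langle \vec z, \vec 0\rangle = 0$, so $\vec y \in (\ker M)^\perp$; this uses only the adjointness of $M$ and $M^\T$ under the bilinear form, which holds over any commutative ring. For the reverse inclusion I would apply the duality to the submodule $H = \im M^\T$: it suffices to show $(\im M^\T)^\perp \subseteq \ker M$, because then taking $\perp$ of both sides and using $(H^\perp)^\perp = H$ gives $\ker M)^\perp \subseteq ((\im M^\T)^\perp)^\perp = \im M^\T$. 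And $(\im M^\T)^\perp \subseteq \ker M$ is again elementary: if $\langle \vec x, M^\T \vec z\rangle = 0$ for all $\vec z$, then $\langle M\vec x, \vec z\rangle = 0$ for all $\vec z$, which over $\Zb/r\Zb$ forces $M\vec x = \vec 0$ (take $\vec z$ to be standard basis vectors). Combining the two inclusions yields $\im M^\T = (\ker M)^\perp$.

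The only genuinely nontrivial input is the biduality $(H^\perp)^\perp = H$ over $\Zb/r\Zb$, and that has been quarantined into the cited Pontryagin-type statement about character groups of finite abelian groups (the paper attributes $(H^\downfree)^\downfree = H$ to \cite[Proposition 3.4]{Washington}). Everything else is the standard adjoint bookkeeping and the non-degeneracy remark that $\langle M\vec x, \vec z\rangle = 0$ for all $\vec z$ implies $M\vec x = \vec 0$. So I expect the main obstacle to be purely expository: making the correspondence $H^\downfree \leftrightarrow H^\perp$ airtight — in particular checking that $\downfree$ really does match the standard bilinear orthogonality and not some twisted pairing — and being careful that "subspace" here means $\Zb/r\Zb$-submodule throughout, since over a non-field we cannot invoke dimension counting and must lean entirely on the duality identity. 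Once that identification is pinned down, the proposition follows in a few lines.
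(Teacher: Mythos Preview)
Your proposal is correct and follows essentially the same approach as the paper: both arguments reduce the proposition to the biduality $(H^\perp)^\perp = H$ over $\Zb/r\Zb$, establish that biduality by transporting the character-theoretic identity $(H^\downfree)^\downfree = H$ through the bijection $\vec v \leftrightarrow \chi_{\vec v}$, and then invoke the classical adjoint argument. The only difference is that the paper leaves the passage from biduality to $\im M^\T = (\ker M)^\perp$ implicit (remarking earlier that the field-case proof ``relies on'' $(W^\perp)^\perp = W$), whereas you spell out the two inclusions via adjointness and non-degeneracy explicitly.
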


%This proves that $(\ker A_m)^\perp = \im A_m^\T$ and that $(\im A_m^\T)^\perp = \ker A_m$ for $m>1$.

\subsection*{When $\bm{\Gamma}$ \emph{is} Magic}

We have already seen that a $k$-uniform hypergraph $\Gamma$ is not magic over any group if $p-q\in \im A^\T$.  We now consider the situation where we know $\Gamma$ is not magic over any group and determine what we can say about $\im A^\T$.

Theorem~\ref{thm:equalpair} implies that there exists some pair of points $p, q \in \PP$ such that $f(p)=f(q)$ for all line-invariant functions for all groups $G$.   Since $\ker A_G$ contains all line-invariant functions with magic constant $0$, it follows specifically that $f(p)=f(q)$ for every function $f \in \ker A_G$.

The standard inner product on $\Zb\PP^2$ (\textit{resp. $(\Zb/r\Zb)\PP^2$}) with respect to the basis of points, yields the pairing
\[\langle p-q, [f]\rangle = f(p-q)= f(p)-f(q)=0.\]
So $p-q\in (\ker A)^\perp$ (\textit{resp.} $(\ker A_r)^\perp$).  When working over $\Zb$, as we have noted, it is not necessarily true that $(\ker A)^\perp=\im A^\T$.  However, it is true that $(\ker A_r)^\perp=\im A_r^\T$, and $p-q\in \im A_r^\T$ for all $r\in \Nb, r>1$.  Therefore, by the local-global principle for systems of linear equations $p-q\in \im A^\T$.  This gives our main result.

%Notice, therefore, that if $f \in \ker A$, then using the standard inner product we have $\langle f, \vec{e}_i - \vec{e}_j\rangle = f(p_i)-f(p_j)$ which puts $\vec{e}_i - \vec{e}_j$ in $(\ker A)^\perp$.  Our goal is to demonstrate that the vector $\vec{e}_i - \vec{e}_j$ is in $\im A^\T$ when the matrix is considered as a matrix over $\Zb$.

\begin{main} A $k$-uniform hypergraph $\Gamma=(\PP, \LL)$ with incidence matrix $A$ is not magic over any group $G$ if and only if there are $p, q\in \PP$ with $p-q \in \im A^\T$.
\end{main}

This result has a geometric interpretation as well. Assume $p-q$ is in the $\Zb$-row space of $A$.  Then $p-q$ can be written as a linear combination of lines (rows), and the lines with non-zero coefficients correspond to a forbidden substructure.  % which forbids the existence of a magic labeling.  
For example, in the case of the $14_3$-configuration with points labeled $1, \ldots, 14$ and lines described as in Figure \ref{fig:14_3lines},
\begin{figure}[h!]
\[\begin{array}{cccccccccccccc}
L_1&L_2&L_3&L_4&L_5&L_6&L_7&L_8&L_9&L_{10}&L_{11}&L_{12}&L_{13}&L_{14}\\
1~&1~&1~&2~&5~&2~&8~&5~&3~&7~&6~&3~&9~&11\\
2&4&6&4&6&7&9&10&11&8&12&4&12&13\\
3&5&7&8&9&10&10&11&12&13&13&14&14&14\end{array}\]
\caption{Lines of a $14_3$-configuration.}
\label{fig:14_3lines}
\end{figure}
there is only one $\Zb$-linear combination of the lines that results in $\vec{e}_i-\vec{e}_j$ for some $i<j$, namely the combination
$$-L_1 + L_2 + L_6 + L_7 - 2L_8 + L_9 - L_{10} - L_{13} + L_{14} = \vec{e}_4 - \vec{e}_5.$$
The lines that appear here form the subconfiguration given in Figure~\ref{fig:newsubconfig}.  By taking the black lines to have coefficient 1, the red lines to have coefficient -1, and the green line to have coefficient -2, this subconfiguration forces $f(p)$ to equal $f(q)$ for all line-invariant functions.

\begin{figure}[ht!]
\captionsetup[subfigure]{labelformat=simple}
\centering
\begin{tikzpicture}%[scale=1.2,baseline={([yshift=-.8ex]current bounding box.center)}]
    \draw[ultra thick] (1,0) -- (4,0);
    \draw[ultra thick] (1,1) -- (4,1);
    \draw[ultra thick] (0,2) -- (2,2);
    \draw[ultra thick] (1,1) -- (3,2);
    \draw[ultra thick] (1,0) -- (4,-1);
    \draw[ultra thick,green] (1,0) -- (1,2);
    \draw[ultra thick,red] (2,0) -- (2,2);
    \draw[ultra thick,red] (3,-0.667) -- (3,2);
    \draw[ultra thick,red] (4,-1) -- (4,1);
    \draw[thick,fill=blue] (1,0) circle (0.1);
    \draw[thick,fill=blue] (2,0) circle (0.1);
    \draw[thick,fill=blue] (3,-0.667) circle (0.1);
    \draw[thick,fill=blue] (1,1) circle (0.1);
    \draw[thick,fill=blue] (2,2) circle (0.1);
    \draw[thick,fill=blue] (3,1) circle (0.1);
    \draw[thick,fill=blue] (1,2) circle (0.1);
    \draw[thick,fill=blue] (2,1.5) circle (0.1);
    \draw[thick,fill=blue] (3,2) circle (0.1);
    \draw[thick,fill=blue] (0,2) circle (0.1);
    \draw[thick,fill=blue] (4,-1) circle (0.1);
    \draw[thick,fill=blue] (4,0) circle (0.1);
    \draw[thick,fill=blue] (4,1) circle (0.1);
    \draw(0,2.3) node {\small $p$};
    \draw(1,2.3) node {\small $q$};
\end{tikzpicture}
\caption{A forbidden substructure.}
\label{fig:newsubconfig}
\end{figure}

\subsection*{When $\bm{\Gamma}$ is Magic over $\bm{\Zb}$}

A slight adjustment to the ideas behind our main theorem allows us to move back towards the more traditional magic setting and determine whether a $k$-uniform hypergraph is magic over $\Zb$ as well.  The result we give in Theorem~\ref{thm:magicoverZ} might be known in the dual setting of edge labelings of graphs (see \cite{Doob2}), however it is stated in a much different way and the proof appears to be lacking crucial details.  Before we give this result, we first need an analogous version of Theorem~\ref{thm:equalpair} specifically for the group $\Zb$.

The idea is similar to our previous proof, but instead of taking a direct sum of labelings with values in a direct product of groups, we scale labelings in $\Zb$ so that we may take an internal sum of labelings that will end up being injective.

Consider a $k$-uniform hypergraph $\Gamma=(\PP,\LL)$ and suppose that for each $p, q\in \PP$ there is a line invariant function $f_{p,q}: \PP \to \Zb$ such that $f_{p,q}(p) \neq f_{p,q}(q)$. Given the points $p_1, \dots, p_n \in \PP$ we let $b=\max\{|f_{p,q}(p_i)| \mid p,q,p_i \in \PP\}$.  Then we define a new line-invariant function %{\color{red}(is there a smarter way?)}
\[f=\sum_{1 \leq i < j \leq n} (2b+1)[n(i-1) + j]f_{p_i,p_j}\]
By construction $f$ is injective and $\Gamma$ is magic over $\Zb$. In summary

\begin{lemma}\label{lemma:Zequalpair}
A $k$-uniform hypergraph $\Gamma=(\PP,\LL)$ is not magic over $\Zb$ if and only if there exists a pair of points $p,q \in \PP$ such that for all line-invariant functions $f:\PP \to \Zb$ we have $f(p)=f(q)$.
\end{lemma}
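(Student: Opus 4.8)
The plan is to prove the two implications separately. The reverse direction (``if there is such a pair then $\Gamma$ is not magic over $\Zb$'') is immediate: suppose $p,q\in\PP$ satisfy $f(p)=f(q)$ for every line-invariant $f\colon\PP\to\Zb$. A magic function is in particular line-invariant, so no magic function can distinguish $p$ from $q$; but a magic function is by definition injective, so it \emph{would} distinguish them. Hence no magic function $\PP\to\Zb$ exists, i.e.\ $\Gamma$ is not magic over $\Zb$. All the real work is in the forward direction, which I would prove by contrapositive, constructing a single magic $\Zb$-labeling out of a family of pairwise-distinguishing line-invariant functions. This parallels the proof of Theorem~\ref{thm:equalpair}, but the external direct sum $\bigoplus_{(p,q)} G_{p,q}$ used there is unavailable once we are required to land in $\Zb$; instead I would combine the functions by a rapidly-weighted internal $\Zb$-linear combination, which is exactly the ``scaling'' alluded to before the statement.

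For the forward direction, assume that for every pair $p,q\in\PP$ there is a line-invariant $f_{p,q}\colon\PP\to\Zb$ with $f_{p,q}(p)\neq f_{p,q}(q)$, and fix an enumeration $p_1,\dots,p_n$ of $\PP$. Since $\PP$ is finite there are finitely many such functions, so $b=\max\{\,|f_{p,q}(p_i)| : p,q,p_i\in\PP\,\}$ is a well-defined nonnegative integer. Two preliminary observations: (i) any $\Zb$-linear combination $\sum a_{p,q}f_{p,q}$ is again line-invariant, with magic constant $\sum a_{p,q}c_{p,q}$ where $c_{p,q}$ is the magic constant of $f_{p,q}$, because $f\mapsto f(A^\T L)$ is $\Zb$-linear; and (ii) each $f_{p_i,p_j}$ takes values in the window $\{-b,\dots,b\}$, which has exactly $2b+1$ elements. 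I would then set
\[f=\sum_{1\le i<j\le n}(2b+1)^{\,n(i-1)+j}\,f_{p_i,p_j},\]
which is line-invariant by (i); it remains only to check that $f$ is injective.

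Injectivity is the main point, and the step I expect to require the most care. Because the exponents $n(i-1)+j$ are pairwise distinct as $(i,j)$ ranges over $1\le i<j\le n$, and because of (ii), the displayed expression is a balanced base-$(2b+1)$ expansion whose ``digit'' in position $n(i-1)+j$ is $f_{p_i,p_j}(p)$. By uniqueness of balanced base-$(2b+1)$ representations with digits in $\{-b,\dots,b\}$, $f(p_s)=f(p_t)$ would force $f_{p_i,p_j}(p_s)=f_{p_i,p_j}(p_t)$ for \emph{every} pair $(i,j)$, and in particular (taking $i<j$ with $\{p_i,p_j\}=\{p_s,p_t\}$) $f_{p_s,p_t}(p_s)=f_{p_s,p_t}(p_t)$, contradicting the choice of $f_{p_s,p_t}$. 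So $f$ is injective, hence a magic function, and $\Gamma$ is magic over $\Zb$, completing the contrapositive. The one thing to write out cleanly is this base-representation step: either invoke uniqueness of the balanced base-$(2b+1)$ expansion directly, or equivalently run the elementary ``top surviving digit dominates'' estimate $\bigl|\sum_{(i,j)\neq(s,t)}(2b+1)^{\,n(i-1)+j}d_{i,j}\bigr| < (2b+1)^{\,n(s-1)+t}$ for the highest-indexed nonzero digit, where $d_{i,j}=f_{p_i,p_j}(p_s)-f_{p_i,p_j}(p_t)\in\{-2b,\dots,2b\}$; every other step is bookkeeping.
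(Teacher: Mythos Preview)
Your proof is correct and follows the same strategy as the paper: prove the easy reverse implication directly, and prove the forward direction by contrapositive via an internal $\Zb$-linear combination of the distinguishing line-invariant functions with rapidly growing weights. Your version is in fact more complete than the paper's, which merely asserts injectivity ``by construction''; moreover, your choice of coefficients $(2b+1)^{n(i-1)+j}$ and the accompanying balanced base-$(2b+1)$ argument actually establish injectivity, whereas the paper's displayed coefficients $(2b+1)[n(i-1)+j]$, read literally as products rather than powers, would not.
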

%\begin{proof} If such a pair of points exists, then $\Gamma$ is not magic over $\Zb$.  So suppose no such pair exists.  Then for each pair of points $(p,q) \in \PP$ there exists a line-invariant function $f_{p,q}: \PP \to \Zb$ such that $f_{p,q}(p) \neq f_{p,q}(q)$.  Given the points $p_1, \dots, p_n \in \PP$ we let $b=\max\{|f_{p,q}(p_i)| \mid p,q,p_i \in \PP\}$.  Then we define a new line-invariant function {\color{red}(is there a smarter way?)}
%$$f=\sum_{1 \leq i < j \leq n} (2b+1)[n(i-1) + j]f_{p_i,p_j}$$
%which also distinguishes all points in $\PP$.  Hence, $\Gamma$ is magic over $\Zb$.
%\end{proof}
We can use this result in determining whether or not a hypergraph is magic over $\Zb$.  The result is similar to our main theorem, except now we are interested in whether or not $p-q$ is in $\im A_\Qb^\T$.  If $p-q$ is in $\im A_\Qb^\T$ then we can write
\[p-q =\sum_{i=1}^m b_i A_\Qb^\T L_i\]
with $b_i\in \Qb$. Let $d$ be the common denominator of the $b_i$.  Then by multiplying through we see that $dp-dq$ is a $\Zb$-linear combination of the lines.  If $f$ is a $\Zb$-line-invariant function with magic constant $c$ then we have
\[ =f\left(\sum_{i=1}^m db_i A_\Qb^\T L_i\right)=\sum_{i=1}^m db_i f\left(A_\Qb^\T L_i\right)=\sum_{i=1}^m db_i c=c\sum_{i=1}^m db_i.\]
This sum is $0$ for the same reasons as in the main theorem.  Similarly $f(dp -dq)=df(p)-d(q)$, and combining these we obtain $df(p)=df(q)$.  Since $f$ takes values in $\Zb$ this implies $f(p)=f(q)$ and the function is not magic.

Conversely, if $\Gamma$ is not magic over $\Zb$, then by Lemma~\ref{lemma:Zequalpair} it follows that there exists some pair of points $p, q \in \PP$ such that $f(p)=f(q)$ for all line-invariant functions $f:\PP \to \Zb$.  Just as in the proof of the Main Theorem, this puts $p-q \in (\ker A_\Qb)^\perp$.  Note, however, that since we are now working over the field $\Qb$ we may immediately conclude that $(\ker A_\Qb)^\perp = \im A_\Qb^\T$.   We summarize the results below.

\begin{theorem}\label{thm:magicoverZ} A $k$-uniform hypergraph $\Gamma=(\PP,\LL)$ with incidence matrix $A$ is not magic over $\Zb$ if and only if there exists some $\Qb$-linear combination of the rows of $A$ that is equal to $\vec{e}_i - \vec{e}_j$ for some $i \neq j$.
\end{theorem}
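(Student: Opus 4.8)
The plan is to deduce Theorem~\ref{thm:magicoverZ} from Lemma~\ref{lemma:Zequalpair} together with the elementary algebra already carried out in the text, so that essentially no new ideas are needed beyond organizing the two directions cleanly. The statement asserts an ``if and only if'', and the natural move is to note that ``$\Gamma$ is magic over $\Zb$'' is, by Lemma~\ref{lemma:Zequalpair}, equivalent to the assertion that for every pair $p\neq q\in\PP$ there is a line-invariant $f\colon\PP\to\Zb$ with $f(p)\neq f(q)$; so it suffices to show, for a fixed pair $p,q$, that ``$f(p)=f(q)$ for all line-invariant $f\colon\PP\to\Zb$'' holds \emph{if and only if} $\vec e_i-\vec e_j$ (where $i,j$ index $p,q$) is a $\Qb$-linear combination of the rows of $A$, i.e.\ $p-q\in\im A_\Qb^\T$. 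Translating row-space membership into the $\vec e_i-\vec e_j$ language is immediate once one recalls that $p-q$ is exactly the coordinate vector $\vec e_i-\vec e_j$ under the basis of points, and that $\im A_\Qb^\T$ is the $\Qb$-row space of $A$.

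First I would prove the ``not magic $\Rightarrow$ row-space'' direction (the contrapositive of the easy half of the equivalence for the fixed pair), which is the content of the paragraph beginning ``Conversely, if $\Gamma$ is not magic over $\Zb$''. Here one takes the pair $p,q$ supplied by Lemma~\ref{lemma:Zequalpair}, uses the standard inner product on $\Qb\PP$ with respect to the point basis to observe $\langle p-q,[f]\rangle=f(p)-f(q)=0$ for every $f\in\ker A_\Qb$ — note that $\Zb$-valued line-invariant functions with magic constant $0$ sit inside $\ker A_\Qb$, which is enough to pin down $p-q\in(\ker A_\Qb)^\perp$ — and then invoke the classical field-level identity $(\ker A_\Qb)^\perp=\im A_\Qb^\T$ (Axler, Prop.~6.46, cited in the excerpt). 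This is where working over $\Qb$ rather than $\Zb$ pays off: no local-global principle is required, since $\Qb$ is a field and Proposition~\ref{prop:mperp}'s $\Zb/r\Zb$ workaround is unnecessary.

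Next I would record the converse direction for the fixed pair, namely ``$p-q\in\im A_\Qb^\T\Rightarrow f(p)=f(q)$ for all line-invariant $f\colon\PP\to\Zb$'', which is exactly the clearing-denominators argument already spelled out in the excerpt: write $p-q=\sum_i b_i A_\Qb^\T L_i$ with $b_i\in\Qb$, let $d$ be a common denominator, apply a $\Zb$-line-invariant $f$ with magic constant $c$ to $dp-dq=\sum_i db_i A_\Qb^\T L_i$, use $f(A^\T L_i)=c$ and the counting map $\ast$ (so that $\sum_i db_i=0$ because $k\sum_i db_i=(dp-dq)^\ast=0$) to get $df(p)=df(q)$, and cancel $d$ since $f$ is $\Zb$-valued. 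Combining the two directions at the level of a fixed pair, and then quantifying over all pairs via Lemma~\ref{lemma:Zequalpair}, yields the theorem.

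I do not expect a serious obstacle; the only point that needs care is the bookkeeping around which functions lie in $\ker A_\Qb$ versus which are merely $\Zb$-valued line-invariant functions, so that the inclusion $\{$line-invariant $f\colon\PP\to\Zb$ with constant $0\}\subseteq\ker A_\Qb$ is used in the correct direction — we only need the existence of \emph{some} such $f$ distinguishing $p$ and $q$ to fail, and any such $f$ can be normalized to have magic constant $0$ by subtracting a constant function (legitimate since $\Zb$ is torsion-free and $k$-uniformity makes constants line-invariant), after which it genuinely lies in $\ker A_\Qb$. The mildest subtlety is making sure the cancellation of $d$ is valid — it is, precisely because the codomain is $\Zb$, and this is the whole reason Lemma~\ref{lemma:Zequalpair} is phrased for $\Zb$ rather than a general group; I would flag this explicitly rather than let it pass silently.
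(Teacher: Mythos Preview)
Your proposal is correct and follows essentially the same route as the paper: invoke Lemma~\ref{lemma:Zequalpair}, use the clearing-denominators computation with the counting map $\ast$ for the ``$p-q\in\im A_\Qb^\T\Rightarrow$ not magic'' direction, and use $(\ker A_\Qb)^\perp=\im A_\Qb^\T$ over the field $\Qb$ for the converse. You are in fact slightly more careful than the paper in flagging why $\Zb$-valued information suffices to conclude $p-q\in(\ker A_\Qb)^\perp$ (via scaling, since $\ker A_\Qb$ is the $\Qb$-span of the integral kernel); the normalization-to-constant-$0$ remark is harmless but not strictly needed, since Lemma~\ref{lemma:Zequalpair} already covers all line-invariant $f$ and in particular those in $\ker A$.
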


\subsection*{Applications to $n_3$-configurations}
By applying our algorithms to the known complete lists of $n_3$-configurations for $n=7, 8, \dots, 14$, we are able to determine how many of these configurations are magic over some abelian group as well as how many are magic over $\Zb$ specifically.  In addition, it is important to point out that, for $n_3$-configurations, the two forbidden substructures in Figure~\ref{fig:subconfigs} are the only forbidden substructures using six or fewer lines.  As mentioned previously, these substructures account for the majority of $n_3$-configurations, for $n=7, \dots, 14$, which are not magic.  In the table below, we also count the number of configurations containing at least one of these small substructures.   %{\color{red}(do we want discussion of computational time here?)}

$$
\begin{tabular}{||c|c|c|c|c|c||}
\hline \hline
$n$ & \# Configs. & \# w/ Small & \# Magic & Magic/Total & \# Magic over $\Zb$\\
\hline
7 & 1 & 0 & 1 &1& 0\\
\hline
8 & 1 & 0 & 1 &1& 0\\
\hline
9 & 3 & 1 & 2 &2/3& 0\\
\hline
10 & 10 & 5 & 5 &1/2& 0\\
\hline
11 & 31 & 22 & 9 &$\approx$0.290& 1\\
\hline
12 & 229 & 187 & 34 &$\approx$0.148& 1\\
\hline
13 & 2036 & 1727 & 198 & $\approx$0.097 & 22\\
\hline
14 & 21399 & 17933 & 1467 & $\approx$0.069 &  125\\
\hline \hline
\end{tabular}
$$

Based on our findings in the table, it appears as though the fraction of magic $n_3$-configurations within the total number of configurations is approaching zero.  %{\color{red}add more intuition here}
This seems reasonable from the perspective of forbidden substructures.  Observe that the small forbidden substructures found commonly in $n_3$-configurations (see Figure~\ref{fig:subconfigs}) involve collections of parallel lines.  In addition, as $n$ grows larger it becomes easier to find parallel lines within $n_3$-configurations.  Furthermore, there is an increased chance that an $n_3$-configuration will contain larger forbidden substructures as well.

\begin{conj} $\displaystyle \lim_{n\to\infty} \frac{\text{\# of $n_3$-configs magic over some group}}{\text{\# of $n_3$-configs}} \to 0$.
\end{conj}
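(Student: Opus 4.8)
The plan is to prove the stronger assertion that \emph{almost every} $n_3$-configuration contains a forbidden substructure; the conjecture then follows from the Main Theorem. Indeed, by that theorem a configuration $\Gamma$ fails to be magic over every abelian group exactly when $\vec e_i - \vec e_j$ lies in the $\Zb$-row space of its incidence matrix for some $i\neq j$, and the lines occurring with nonzero coefficient in such a combination constitute a forbidden substructure. So it is enough to fix one forbidden substructure $F$ (or a fixed finite family) and show that the proportion of $n_3$-configurations on $n$ points containing no copy of $F$ tends to $0$. Note that a purely deterministic ``all sufficiently large configurations contain $F$'' statement is \emph{not} available, since the table shows the number of magic $n_3$-configurations grows with $n$; the argument must be a counting/probabilistic one that merely controls the typical configuration.

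For $F$ I would take one of the parallel-line substructures of Figure~\ref{fig:subconfigs} --- say the six-line structure of Figure~\ref{fig:subconfig2}: three pairwise non-intersecting (``black'') lines $V_1,V_2,V_3$ and three pairwise non-intersecting (``red'') lines $R_0,R_1,R_2$, incident so that $V_1+V_2+V_3-R_0-R_1-R_2$ is a difference $\vec e_p-\vec e_q$ of basis vectors. This $F$ is itself a linear $3$-uniform hypergraph, it uses boundedly many points and lines, and --- the point emphasized after the conjecture --- its occurrence is driven entirely by the proliferation of ``parallel'' (non-intersecting) line pairs as $n$ grows. The heuristic to be made rigorous is a first-and-second moment argument: writing $X_F(\Gamma)$ for the number of copies of $F$ in a uniformly random $n_3$-configuration $\Gamma$ on $n$ labeled points, one wants $\mathbb{E}[X_F]\to\infty$ together with $\mathbb{E}[X_F^2]=(1+o(1))\,\mathbb{E}[X_F]^2$, so that Chebyshev's inequality gives $\Pr[X_F=0]\to 0$. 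One then transfers the statement from labeled configurations to isomorphism classes --- the quantity actually tabulated --- by a standard argument (e.g.\ that almost all large $n_3$-configurations are rigid), a step that would itself require justification.

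I expect the moment computations to be the genuine obstacle, because the uniform distribution on $n_3$-configurations is far less tractable than an Erd\H{o}s--R\'enyi-type model: the defining conditions (each point on exactly three lines; each pair of points on at most one line) are rigid, and $\mathbb{E}[X_F]$ is essentially the ratio of the number of $n_3$-configurations \emph{containing a prescribed copy of} $F$ to the total number of $n_3$-configurations on $n$ points --- both hard enumeration problems with no known closed form. A plausible route is to compare the uniform $n_3$-configuration model with random $3$-regular linear $3$-uniform hypergraphs (partial Steiner-type systems), for which modern absorption and switching techniques, together with subgraph-counting results, are available, and to argue that those results force every fixed \emph{legal} sub(hyper)graph --- in particular $F$ with its required pattern of parallelism, not merely an abstract copy of its line set --- to appear with probability $1-o(1)$. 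Controlling the passage between models, and verifying that the exact incidence pattern of $F$ occurs rather than just its underlying hypergraph, is where the real difficulty lies; an unconditional proof may in fact require first establishing sharp asymptotics for the number of $n_3$-configurations, which does not appear to be in the literature.
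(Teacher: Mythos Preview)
The statement you are addressing is a \emph{conjecture} in the paper, not a theorem: the paper offers no proof, only the heuristic remark that the small forbidden substructures of Figure~\ref{fig:subconfigs} are built from parallel lines and that parallel lines become easier to find as $n$ grows. Your proposal is in the same spirit --- reduce to showing that a fixed forbidden substructure $F$ appears in almost every $n_3$-configuration --- and you correctly identify this reduction via the Main Theorem. In that sense your approach matches the paper's intuition exactly.

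But you should be clear that what you have written is a research outline, not a proof, and you essentially say so yourself. The gaps you flag are genuine and currently open: there is no known tractable probabilistic model for a uniformly random $n_3$-configuration, no established second-moment or switching machinery for counting fixed sub-hypergraphs in this setting, no proof that almost all $n_3$-configurations are rigid (needed to pass from labeled to unlabeled counts), and no asymptotic formula for the number of $n_3$-configurations. Any one of these would be a substantial result on its own. So the honest summary is that the paper does not prove the conjecture, your proposal does not prove it either, and the obstacles you list are precisely why it remains a conjecture.
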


The smallest example of an $n_3$-configuration that is magic over $\Zb$ is the following $11_3$-configuration:
\begin{center}
$\begin{array}{ccccccccccc} L_1 & L_2& L_3 & L_4 & L_5 & L_6 & L_7 & L_8 & L_9 & L_{10} & L_{11} \\1&1&1&2&2&2&3&3&3&4&4\\6&7&8&5&6&7&4&5&7&5&6\\11&10&9&11&10&9&11&10&8&9&8\end{array}$
\end{center}

A magic labeling of this configuration is provided in Figure~\ref{fig:smallest_Z}.  Note that this labeling is actually \emph{supermagic} as it uses the consecutive integers 1, $\dots$, 11. %and has magic constant 18.
In an upcoming paper, the authors plan to explore magic $n_3$-configurations further -- both supermagic labelings and magic labelings over other abelian groups.

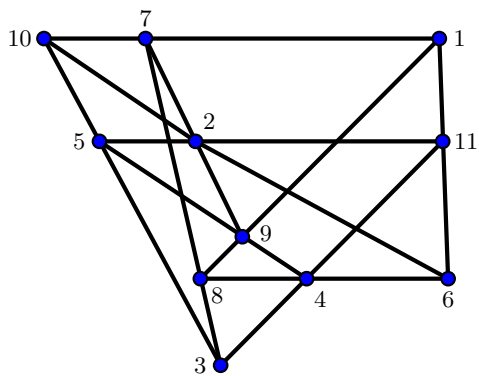
\begin{figure}[ht!]
\captionsetup[subfigure]{labelformat=simple}
\centering
\begin{tikzpicture}[scale=0.9]
\draw[ultra thick] (-1.84,3.56) -- (0.77, -1.27);
\draw[ultra thick] (-1.84, 3.56) -- (4, 3.56);
\draw[ultra thick] (-1.84, 3.56) -- (0.4, 2.04);
\draw[ultra thick] (4.13,0.01) -- (0.4, 2.04);
\draw[ultra thick] (0.47, 0.01) -- (4.13, 0.01);
\draw[ultra thick] (0.47, 0.01) -- (4, 3.56);
\draw[ultra thick] (-0.34, 3.56) -- (1.09, 0.63);
\draw[ultra thick] (-1.02, 2.04) -- (4.05, 2.04);
\draw[ultra thick] (0.77, -1.27) -- (4.05, 2.04);
\draw[ultra thick] (-1.02, 2.04) -- (2.04, 0.01);
\draw[ultra thick] (4, 3.56) -- (4.13, 0.01);
\draw[ultra thick] (-0.34, 3.56) -- (0.77, -1.27);
\draw[thick,fill=blue] (-1.84, 3.56) circle (0.1);
\draw[thick,fill=blue] (4, 3.56) circle (0.1);
\draw[thick,fill=blue] (-1.02, 2.04) circle (0.1);
\draw[thick,fill=blue] (4.05, 2.04) circle (0.1);
\draw[thick,fill=blue] (4.13, 0.01) circle (0.1);
\draw[thick,fill=blue] (0.47, 0.01) circle (0.1);
\draw[thick,fill=blue] (0.77, -1.27) circle (0.1);
\draw[thick,fill=blue] (-0.34, 3.56) circle (0.1);
\draw[thick,fill=blue] (2.04, 0.01) circle (0.1);
\draw[thick,fill=blue] (1.09, 0.63) circle (0.1);
\draw[thick,fill=blue] (0.4, 2.04) circle (0.1);
\draw(-2.2,3.56) node {\small $10$};
\draw(-0.34,3.86) node {\small $7$};
\draw(4.3,3.56) node {\small $1$};
\draw(4.4,2.04) node {\small $11$};
\draw(4.13,-0.29) node {\small $6$};
\draw(0.72,-0.24) node {\small $8$};
\draw(0.47,-1.27) node {\small $3$};
\draw(-1.32,2.04) node {\small $5$};
\draw(2.24,-0.29) node {\small $4$};
\draw(0.6,2.34) node {\small $2$};
\draw(1.44,0.68) node {\small $9$};
\end{tikzpicture}
\caption{The smallest configuration which is magic over $\Zb$.}
\label{fig:smallest_Z}
\end{figure}

\begin{comment} %%save for the next paper?
For an $n_3$-configuration to have a supermagic labeling, it must be the case that $n$ is odd. Why? Let $c$ be the magic constant. Since there are $n$ lines and each point lies on 3 lines, we must have $nc = 3 \sum_{i=1}^n i = 3n(n+1)/2$, or $c=3(n+1)/2$. Hence $n+1$ must be even, so $n$ is odd.

$$
\begin{tabular}{||c|c||}
\hline \hline
$n$ & \# Magic over $\Zb$ using $1,2,\ldots, n$\\
\hline
11 & 1 \\
\hline
13 & 2 \\
\hline
15 & 18 \\
\hline
17 & 115 \\
\hline
19 & 2338 \\
\hline \hline
\end{tabular}
$$
\end{comment}

%\paragraph*{Acknowledgment}

\end{document}